\let\Cal\mathcal
\let\frak\mathfrak
\let\Bbb\mathbb
\newtheorem*{thm*}{Theorem \thesubsection}
\newtheorem*{lemma*}{Lemma \thesubsection}
\newtheorem*{prop*}{Proposition \thesubsection}
\newtheorem*{cor*}{Corollary \thesubsection}
\theoremstyle{definition}
\newtheorem*{definition*}{Definition \thesubsection}
\theoremstyle{remark}
\newtheorem*{example*}{Example \thesubsection}
\newtheorem*{remark*}{Remark \thesubsection}
\title[Holonomy of Cartan geometries]{Holonomy reductions of Cartan
  geometries\\ 
  and curved orbit decompositions}
\author{A.\ \v Cap, A.R.\ Gover, M.\ Hammerl}
\address{A.\v C.: Faculty of Mathematics\\
University of Vienna\\
Oskar--Morgenstern--Platz 1\\
1090 Wien\\
Austria\\
\newline A.R.G.: Department of Mathematics\\
  The University of Auckland\\
  Private Bag 92019\\
  Auckland 1142\\
  New Zealand;
Mathematical Sciences Institute\\
Australian National University \\ ACT 0200, Australia
\newline
M.H.: Department of Mathematics and Computer Science \\
Ernst Moritz Arndt University of Greifswald \\
Walther-Rathenau-Stra{\ss}e 47 \\
17487 Greifswald \\
Germany 
} 
\email{Andreas.Cap@univie.ac.at}
\email{r.gover@auckland.ac.nz}
\email{matthiasrh@gmail.com}
\def\mb{\mathbf}
\def\mr{\mathrm}
\def\mc{\mathcal}
\newcommand{\hhh}{g}
\newlength{\equwidth}
\DeclareMathOperator{\Ric}{Ric}
\DeclareMathOperator{\tr}{tr}
\DeclareMathOperator{\id}{id}
\def\X{\mathfrak X}
\def\al{\alpha}
\def\ga{\gamma}
\def\ze{\zeta}
\def\th{\theta}
\def\ka{\kappa}
\def\si{\sigma}
\def\Ups{\Upsilon}
\def\ph{\varphi}
\def\om{\omega}
\def\Ga{\Gamma}
\def\Th{\Theta}
\def\La{\Lambda}
\def\Om{\Omega}
\def\x{\times}
\def\t{\otimes}
\def\goesto{\rightarrow}
\def\embed{\hookrightarrow}
\def\na{\mathrm{\nabla}}
\def\G{\mathcal{G}}
\def\Ad{\mathrm{Ad}}
\def\Fl{\mathrm{Fl}}
\def\ddtz{{\frac{d}{dt}}_{|t=0}}
\def\ddt{\frac{d}{dt}}
\def\rr{\ensuremath{\mathbb{R}}}
\def\HH{\ensuremath{\mathcal{H}}}
\def\SL{\ensuremath{\mathrm{SL}}}
\def\SO{\ensuremath{\mathrm{SO}}}
\def\PO{\ensuremath{\mathrm{PO}}}
\def\o{\circ}
\def\sl{\ensuremath{\mathfrak{sl}}}
\def\h{\ensuremath{\mathfrak{h}}}
\def\g{\ensuremath{\mathfrak{g}}}
\def\p{\ensuremath{\mathfrak{p}}}
\def\ce{\ensuremath{\mathcal{E}}}
\DeclareMathOperator{\Hol}{Hol}
\def\sideremark#1{\ifvmode\leavevmode\fi\vadjust{\vbox to0pt{\vss
 \hbox to 0pt{\hskip\hsize\hskip1em
 \vbox{\hsize3cm\tiny\raggedright\pretolerance10000
 \noindent #1\hfill}\hss}\vbox to8pt{\vfil}\vss}}}%
\def\PO{P\backslash\mc{O}}
\keywords{Cartan geometries, holonomy, overdetermined PDE, tractor
  calculus, parabolic geometry}
\subjclass[2000]{53C29; 53B15; 53A30; 53A20; 58D19; 58J70; 32V05}
\begin{document}
\maketitle

\begin{abstract}
  We develop a holonomy reduction procedure for general Cartan
  geometries. We show that, given a reduction of holonomy, the
  underlying manifold naturally decomposes into a disjoint union of
  initial submanifolds.  Each such submanifold corresponds to an orbit
  of the holonomy group on the modelling homogeneous space and carries
  a canonical induced Cartan geometry. The result can therefore be
  understood as a `curved orbit decomposition'.  The theory is then
  applied to the study of several invariant overdetermined
  differential equations in projective, conformal and
  CR-geometry. This makes use of an equivalent description of
  solutions to these equations as parallel sections of a tractor
  bundle. In projective geometry we study a third order differential
  equation that governs the existence of a compatible Einstein metric,
  and in conformal geometry we discuss almost Einstein
  scales. Finally, we discuss analogs of the two latter equations in 
  CR-geometry, which leads to invariant equations  that govern the
  existence of a compatible K\"{a}hler-Einstein metric.
  \end{abstract}

\section{Introduction}

In differential geometry great gains can be achieved when apparently
unrelated structures are shown to be concretely linked. Well known
examples include: the {\em Fefferman metric} \cite{fefferman} which
associates to a hypersurface--type CR geometry a uniquely determined
conformal geometry in one higher dimension, and {\em Poincar\'e-Einstein
  geometries} \cite{fefferman-graham}, which realise a conformal
manifold as the boundary (at infinity in a suitable sense) of a
negative Einstein Riemannian manifold. Both structures have been the
focus of considerable attention,
cf. e.g. \cite{GL,lee-ah,fefferman-graham-ambient,guillarmou-qing} and
\cite{lee-fefferman,graham-sparlingchar,baum-cr,nurowski-sparling-cr,cap-gover-cr}.
More recently, there has been excitement surrounding {\em Nurowski's
  conformal structures} \cite{nurowski-metric} which are conformal
geometries canonically associated to certain distributional structures
(generic 2 distributions in dimension 5). These
arise in the study of certain ordinary differential equations linked
to Cartan's 5-variable paper \cite{cartan-cinq} and Bryant's natural
construction \cite{bryant-3planes} of a conformal split signature
$(3,3)$-structure from a given generic rank 3 distribution, cf. also
\cite{leistner-nurowski-g2ambient, mrh-sag-rank2, mrh-sag-twistors}.
While the three different constructions mentioned above appear at
first sight to be unrelated, in fact they may be viewed as special
cases of a single phenomenon. Namely they each can be understood as
arising from a holonomy reduction of a certain connection. The
connection involved is not on the tangent bundle but is on a prolonged
structure, and is known as a Cartan connection.

In Riemannian geometry the study and application of holonomy reduction
has a long history which includes Cartan's classification of symmetric
spaces \cite{cartan-remarquable} and the de Rham decomposition theorem.
The classification of possible holonomy groups of Riemannian and (torsion
free) affine connections, as well as the construction of geometries
realising these groups, forms one of the cornerstones of differential
geometry \cite{berger-53,bryant-exceptional,merkulov-schwachhoefer,bryant-recent}.
In this setting the local geometric implications of a given holonomy reduction,
e.g.  the existence of a compatible complex structure for a given
Riemannian metric, can be readily read off from the nature of the
group arising since the connections involved are on the tangent
bundle. The notion of holonomy easily generalises to principal
connections on principal bundles, in which case the holonomy group
becomes a subgroup of the structure group of the principal bundle and
in this generality the Ambrose-Singer theorem \cite{ambrose-singer-53}
relates the holonomy 
group to the curvature form of the connection. In this case also, the
local geometric implications of reductions are evident.

Recently there has been considerable interest in understanding
holonomy questions for those Cartan connections arising naturally in
{\em parabolic geometries}; the latter form a broad class of
structures which includes conformal, CR, and projective geometries as
special cases.  For the Cartan connections of projective and conformal
structures, the possible holonomy groups have been studied, and partial
classifications are available
\cite{armstrong-projective2,armstrong-conformal,leitner-normal}. This
aspect follows the treatment of principal connections. In contrast
determining the geometric implications of reduced holonomy in this
setting is far more subtle, since the connection which defines the
holonomy lives on a prolonged bundle.  Prior to the present work there
has been no general approach for studying this problem holistically on
the manifold. In cases where geometric implications have been
discussed it was usually necessary to make certain non-degeneracy assumptions,
which means that typically they apply only to a dense open subset of
the original manifold. For instance, the \emph{conformal de Rham}
theorem discussed in \cite{armstrong-conformal} and
\cite{leitner-normal} yields a decomposition of a conformal structure
with decomposable holonomy on an open dense subset.  A link between
Einstein metrics and holonomy reductions dates back at least to
results of Sasaki \cite{sasaki}, has featured in the works mentioned
and also in \cite{arm-Ein}, which links Einstein metrics to parabolic
holonomy reductions in a wide range of settings. Again in these works the key
results are stated for open dense subsets of the manifold.  Other
results of Armstrong in \cite{armstrong-projective1} show that reduced
projective holonomy yields certain familiar geometric structures on
such subsets of the original manifold, like certain contact, or
complex projective structures. 
The complement of such a non-degenerate
open dense interior (when non-empty) carries geometric structure
itself, and an interesting aspect is how this relates to the ambient
structure.

A prominent case of the phenomenon just mentioned is that of
Poincar\'{e}-Einstein metrics, which, as pointed out in
\cite{gover-Prague}, correspond to a structure equivalent to a
certainly holonomy reduction of a conformal manifold with boundary.
Here the open dense subset carries an Einstein metric, and the
singularity set coincides with the boundary. This inherits a
conformal structure which is the conformal infinity of the Einstein
metric; the Poincar\'e-Einstein programme is precisely concerned with
relating the Einstein structure to the geometry of the conformal
hypersurface. This picture is slightly generalised by the notion of an
almost Einstein manifold which means simply any conformal manifold
with a similar type of holonomy reduction (meaning essentially the
Cartan holonomy group fixes a point), and a programme to study the
nature and geometry of the singularity set using the tractor calculus
associated to the Cartan connection (see Section \ref{partrac} below)
was developed in \cite{gover-PE,gover-aes}. Further examples and
related reductions are constructed and discussed in \cite{gover-L}.
More subtle examples (including a discussion of the singularity set)
and a treatment of decomposable conformal holonomy are presented in
the works \cite{leitner-collapse,leitner-multi,armstrongLeit} of
Leitner and Armstrong-Leitner.

The purpose of this article is to develop a completely general
approach to determining, everywhere on the manifold, the geometric
implications of any specific holonomy reduction of a Cartan
connection. We find that the behaviour just described for
Poincar\'e-Einstein manifolds is typical, at least of the simplest
cases. In broad terms, our results can be described as follows: Given
a manifold equipped with a Cartan connection we show that a holonomy
reduction of this connection determines a decomposition of the
underlying manifold into a disjoint union of initial submanifolds,
thus yielding a form of stratification.  Each such submanifold inherits a
canonical geometry from the original data. What this geometry is
varies according to the type of strata; we show how to determine this
and key aspects of how it relates to the ambient structure. In many
interesting cases this yields an open dense piece which is canonically
equipped with an affine connection, and a stratification of the closed
complement to this piece, which in turn is endowed with a geometric
structure that may be viewed as a ``limit'' of the structure on the
distinguished open set.  An important point here is that we treat all
possible reductions of the connection, not simply a reduction to the
minimal holonomy group. This means that our results apply
non-trivially to the homogeneous model, and we show that there the
decomposition coincides with an orbit decomposition with respect to
the group arising in the reduction.

Remarkably, many of the nice properties of the orbit decomposition on
the homogeneous model carry over to the curved cases without essential
changes, so we call the decomposition a \textit{curved orbit
  decomposition} in the general case. This was first observed in the
special case of almost Einstein scales for conformal structures in
\cite{gover-aes}. The basic tool to establish this in a general
setting is a comparison map between a curved geometry and the
homogeneous model and we develop this here. (A version of this was
introduced for projective structures in our article
\cite{CGH-projective}.) This proves that the curved orbits are always
initial submanifolds. In addition each curved orbit is then seen to
carry a natural Cartan geometry of the same type as the corresponding
orbit on the homogeneous model.

To obtain more detailed information on the geometric structure of the
curved orbits it is necessary to study the relation between the
curvature of the induced Cartan subgeometry and the curvature of the
original Cartan geometry. This is illustrated by several examples. 

Specialising to parabolic geometries there is a strong connection
between holonomy reductions, in the sense we treat here, and solutions
to invariant overdetermined linear partial differential equations.
This is one of the key motivations for our work. Parabolic geometries
are canonically equipped with such equations; they arise as the
equations of the first operator in certain invariant differential
sequences known as Bernstein-Gelfand-Gelfand (BGG) sequences
\cite{BGG-2001,BGG-Calderbank-Diemer}. These sequences have strong
links to symmetry and representation theory; on the model they resolve
finite dimension representations and in the curved setting one such
sequence controls deformation of structure and is connected with the
existence of infinitesimal automorphisms \cite{cap-infinitaut}.  As we
shall explain, certain special solutions of these ``first BGG''
equations are exactly equivalent to holonomy reductions in our sense.
Thus our results may be recast as describing the geometric
implications of the existence of such {\em normal solutions}. (The
term normal solution was coined in \cite{leitner-normal} in connection
with case of conformal Killing operator on differential forms.) For example
we can show that the zero locus of such a solution cannot have worse
singularities on curved geometries than the zero loci of solutions on
the homogeneous model. In fact related much finer data is available.
This vastly generalises the known results on the possible form of zero
sets for twistor spinors and Einstein rescalings,
cf. e.g. \cite{friedrich-conformalrelation,baum-friedrich-twistors,habermann-twizero,
  kuehnel-rademacher-twizero, leitner-twistor,gover-aes}.

We now give a brief outline of the article: The main result will be
developed in Section \ref{sec-holred}.  There we begin with a short
review of general Cartan geometries in \ref{2.1}, but refer the
interested reader to the extensive treatments of these structures that
can be found in \cite{sharpe,cap-slovak-par}. We introduce the notion
of holonomy of a general Cartan geometry in \ref{sec-holon} and then
discuss how a given holonomy reduction canonically induces a
decomposition of the manifold in \ref{P-types}. Our main theorem
\ref{2.5} describes the structure of the curved orbit decomposition
and the relations of the induced Cartan-subgeometries with the ambient
Cartan geometry. A particularly interesting area of applications is
formed by the solutions of BGG-equations and the study of their
zero-loci, and the general principles for this are outlined in
\ref{3.1}.  In Section \ref{sec-exmp} we study several concrete
BGG-equations. We begin in \ref{sec-exmp-proj} with a third order
differential equation that governs the existence of an Einstein
metric, whose Levi--Civita connection is projectively equivalent to a
given torsion-free affine connection.  In \ref{complex-projective} we
treat an equation on complex projective type structures which include
and generalise the $h$-projective structures of interest in the
literature \cite{Apostolov-Calderbank-Gauduchon,matveev}.  The
solutions of the equation involved describe (almost) K\"{a}hler
metrics on the manifold.  In Section \ref{Fefferman} we discuss how
our general holonomy reduction results can be applied to
Fefferman-type constructions.  Finally we discuss the equation
governing almost Einstein scales in conformal geometry in
\ref{almost-Einstein} and give an interesting analog of that equation
in CR-geometry, \ref{CR-almost-Einstein}.

\section{Holonomy reductions of Cartan geometries}\label{sec-holred}
\subsection{Cartan geometries}\label{2.1}
Let $G$ be a Lie group and $P\subset G$ a closed subgroup. The Lie
algebras of $G$ and $P$ will be denoted $\g$ and $\p$, respectively.
We will always assume that $P$ meets each connected component of $G$,
so the homogeneous space $G/P$ is connected. Cartan geometries of type
$(G,P)$ can be thought of as ``curved analogs'' of the $G$--homogeneous
space $G/P$.

A \textit{Cartan geometry} of type $(G,P)$ on a manifold $M$ is a
$P$-principal bundle $\G\goesto M$ endowed with a \textit{Cartan
  connection} $\om\in\Om^1(\G,\g)$. Denote the principal right action
of $g\in P$ on $\G$ by $r^g$ and the fundamental vector field
generated by $Y\in\p$ by $\ze_Y$, i.e.,
$\ze_Y(u)=\ddtz(u\cdot\exp(tY))$.  Then $\om$ being a Cartan
connection means that the following three properties hold:
  \begin{enumerate}[(C.1)]
  \item\label{cartan1} $\om_{u\cdot p}(T_u r^p
    \xi)=\Ad(p^{-1})\om_u(\xi)$\ for all $p\in P$, $u\in \G,$ and
    $\xi\in T_u\G$.
  \item\label{cartan2} $\om(\ze_Y)=Y$\ for all $Y\in\p$.
  \item\label{cartan3} $\om_u:T_u\G\goesto\g$\ is a linear isomorphism for all $u\in\G$.
  \end{enumerate}

  The homogeneous model of Cartan geometries of type $(G,P)$ is the
  bundle $G\goesto G/P$ with the left Maurer-Cartan form $\om^{MC}\in
  \Om^1(G,\g)$ as the Cartan connection. Indeed, the defining
  properties of a Cartan connection are obvious weakenings of
  properties of the Maurer--Cartan form, which make sense in the more
  general setting.

  The curvature $K\in\Om^2(\G,\g)$ of $\om$ is defined by
\begin{align*}
  K(\xi,\eta)=d\om(\xi,\eta)+[\om(\xi),\om(\eta)],
\end{align*}
which is exactly the failure of $\om$\ to satisfy the Maurer-Cartan
equation.  In particular, the homogeneous model has vanishing
curvature in this sense.  It is a basic fact of Cartan geometries that
$(\G,\om)$ has vanishing curvature if and only if it is locally
isomorphic to the homogeneous model $(G,\om^{MC})$.  It will sometimes
be useful to work with the curvature function
$\ka:\G\goesto\La^2(\g/\p)^*\t\g$ of $\om$, which is defined by
      \begin{align}\label{def-curvfct}
        \ka(u)(X,Y):=K(\om_{u}^{-1}(X),\om_{u}^{-1}(Y))
      \end{align}
      for $u\in\G$ and $X,Y\in\g$.

A Cartan geometry is called \textit{torsion--free} if its curvature
form $K$ has values in $\frak p\subset\frak g$, or equivalently if the
curvature function satisfies $\ka(u)(X,Y)\in\frak p$ for all
$u\in\Cal G$ and $X,Y\in\g$.

\subsection{Holonomies of Cartan geometries and reductions}\label{sec-holon}

The classical concept of holonomy can not be directly applied to a
Cartan connection. Since a Cartan connection $\om$ restricts to a
linear isomorphism on each tangent space, there are no non--constant
curves which are horizontal for $\om$ in the usual sense.  There is a
simple way, however, to connect to the classical concept. A Cartan
connection $\om\in\Om^1(\G,\g)$ is easily seen to extend canonically
to a $G$--principal connection $\hat\om\in\Om^1(\hat\G,\g)$ on the
$G$--principal bundle $\hat\G:=\G\x_PG$. The extension is
characterised by the fact that $i^*\hat\om=\om$, where $i:\G\to\hat\G$
is the obvious inclusion.  Hence for any point $\hat u\in\hat\G$\ the
holonomy group (based at $u$) of the $G$-principal connection
$\hat\om$\ is a subgroup $\Hol_{\hat u}(\hat\om)\subset G$. If we
forget about the choice of the point, we obtain a conjugacy class of
subgroups of $G$, which we denote by $\Hol(\hat\om)$.  We simply
define this to be the holonomy of the original Cartan connection,
i.e.~$\Hol(\om):=\Hol(\hat\om)$.

If the holonomy $\Hol(\om)$\ is not full, i.e.,
$\Hol(\hat\om)\subsetneq G$, the extended connection $\hat\om$\ can be
reduced. For every closed subgroup $H\subset G$\ that contains (any
conjugate of) the holonomy group $\Hol(\hat\om)$\ there exists a
reduction of structure group $\HH\overset{j}{\embed}\hat\G$ from
$G$\ to $H$ that preserves the connection. If $M$ is connected, then
one simply chooses a point $\hat u\in\hat\G$ such that $\Hol_{\hat
  u}(\hat\om)\subset H$ and defines $\HH$ as the set of all points
which can be written as $c(1)\cdot h$ for some $h\in H$ and some
horizontal curve $c:[0,1]\to \hat\G$ with $c(0)=\hat u$. One
immediately verifies that this is a principal $H$--subbundle such that
$\hat\om$ restricts to an $H$--principal connection on $\HH$;
formulated in terms of the embedding $j$, this says that
\begin{align*}
j^*\hat\om\in\Om^1(\HH,\h),
\end{align*}
with $\h$\ the Lie algebra of $H$.  The holonomy group $\Hol(\hat\om)$
is the smallest subgroup of $G$ to which the connection $\hat\om$\ can
be reduced.

By standard theory, a reduction $j:\HH\embed \hat\G$ can be
equivalently described as a section of the associated fibre-bundle
$\hat\G\times_G (G/H)=\hat\G/H$. The second description of this bundle
shows that for any $x\in M$ the fibre of $\HH$ over $x$ is mapped to a
single point in the fibre of $\hat\G\times_G (G/H)$; this describes
the section corresponding to $\HH$. Conversely, the preimage of a
section of $\hat\G/H$ under the natural projection $\hat\G\to\hat\G/H$
is an $H$--principal subbundle. Note further that $\hat\om$ induces a
(non--linear) connection on the associated bundle $\hat\G\times_G
(G/H)$. It is easy to see that the section corresponding to
$j:\HH\embed \hat\G$ is parallel if and only if $\hat\om$ restricts to
an $H$--principal connection on $\HH$.

For our general definition in the setting of Cartan geometries, it
will be very useful to avoid having a distinguished base point. Hence
we work with an abstract $G$--homogeneous space $\mc{O}$ rather than
with $G/H$. One can identify $\G\times_P\mc{O}$ with
$\hat\G\times_G\mc{O}$ so this bundle carries a natural (non--linear)
connection.

\begin{definition*}
  Let $(\G,\om)$ be a Cartan geometry of type $(G,P)$ and let $\mc{O}$
  be a homogeneous space of the group $G$. Then a \textit{holonomy
    reduction of $G$--type $\Cal O$} of the geometry $(\G,\om)$ is a
  parallel section of the bundle $\G\times_P\mc{O}$.
\end{definition*}

\begin{remark*}
  We note here that our holonomy reductions need not be minimal. The
  minimal holonomy reduction of $(\G,\om)$\ is of type $G/\Hol(\om)$,
  which reduces the structure group of the extended principal bundle
  connection $\hat\om$\ to $\Hol(\hat\om)$.  Whenever $(\G,\om)$\
  allows a holonomy reduction of type $\mc{O}=G/H$\ one necessarily
  has that $\Hol(\om)\subset H\subset G$.
\end{remark*}

\subsection{Parallel sections of tractor bundles and corresponding
  holonomy reductions}\label{partrac}
For a $G$-representation $V$\ the associated bundle
$\mc{V}=\hat\G\times_G V=\G\times_P V$ is a \textit{tractor bundle}
and the linear connection induced by $\hat\om$ is called its
\textit{tractor connection}.  Sections of $\mc{V}$ can be identified
with $G$--equivariant smooth maps $\hat\G\to V$, and such a map
$s:\hat\G\to V$ corresponds to a parallel section if and only if it is
constant along any curve $c:[0,1]\goesto\hat\G$ which is horizontal in the
sense that $\hat\om(\ddt(c(t)))=0$.
\begin{lemma*}\label{lem-parorbit}
  The image of the map $s:\hat\G\goesto V$ corresponding to a parallel
  section of $\mc{V}$ over a connected manifold $M$ is a $G$--orbit
  $\mc{O}\subset V$.
\end{lemma*}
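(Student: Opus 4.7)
The plan is to use the two structural properties of $s$---namely its $G$-equivariance and its constancy along horizontal curves---to show that (i) the image is a union of $G$-orbits and (ii) any two points in the image lie in the same orbit. Connectedness of $M$ enters only in (ii).

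First I would dispose of (i). Suppose $v = s(\hat u)$ lies in the image and let $g \in G$. Using the equivariance relation $s(\hat u \cdot g) = g^{-1} \cdot s(\hat u)$ corresponding to the identification $\mc V = \hat\G \times_G V$, we get $g \cdot v = s(\hat u \cdot g^{-1})$, which lies in the image. Thus $\operatorname{Im}(s)$ is $G$-invariant, and in particular contains the full $G$-orbit of every one of its elements.

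For (ii), let $\hat u_1, \hat u_2 \in \hat\G$ be two points and set $v_i := s(\hat u_i)$. Since $M$ is a connected manifold, it is smoothly path-connected, so choose a smooth path $\gamma:[0,1]\to M$ with $\gamma(0) = \pi(\hat u_1)$ and $\gamma(1) = \pi(\hat u_2)$, where $\pi:\hat\G \to M$ is the bundle projection. Because $\hat\om$ is a genuine principal connection on $\hat\G$, parallel transport along $\gamma$ starting from $\hat u_1$ produces a horizontal lift $c:[0,1]\to \hat\G$ with $c(0) = \hat u_1$ and $\pi(c(1)) = \pi(\hat u_2)$. Since $s$ is constant along horizontal curves, $s(c(1)) = s(\hat u_1) = v_1$. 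But $c(1)$ and $\hat u_2$ lie in the same fibre of $\pi$, hence $c(1) = \hat u_2 \cdot g$ for a unique $g \in G$. Applying equivariance,
\begin{equation*}
v_1 \;=\; s(c(1)) \;=\; s(\hat u_2 \cdot g) \;=\; g^{-1}\cdot s(\hat u_2) \;=\; g^{-1}\cdot v_2,
\end{equation*}
so $v_2 = g \cdot v_1$, exhibiting $v_1$ and $v_2$ as lying in a common orbit.

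Combining (i) and (ii), the image of $s$ is precisely the $G$-orbit through any one of its values. I do not anticipate a genuine obstacle; the two mildly technical points to get right are the sign conventions for equivariance (left action on $V$ vs.\ right action on $\hat\G$) and verifying that the horizontal lift constructed from $\hat\om$ as an ordinary principal connection on $\hat\G$ does indeed exist over an arbitrary smooth base path---this is standard for principal connections on a principal bundle and applies here because, although $\om$ itself admits no non-constant horizontal curves, its extension $\hat\om$ is a perfectly ordinary $G$-principal connection.
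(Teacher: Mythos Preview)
Your proof is correct and follows essentially the same approach as the paper's: connect two base points by a smooth path, lift it horizontally using the principal connection $\hat\om$, use constancy of $s$ along horizontal curves, and then invoke $G$-equivariance to conclude that the values lie in a single $G$-orbit. Your presentation is simply more explicit in separating the $G$-invariance of the image from the single-orbit statement, and in flagging the sign conventions and the existence of horizontal lifts.
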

\begin{proof}
  Since $M$ is assumed to be connected we can take a smooth curve
  joining two given points $x,x'\in M$ and lift it to a horizontal
  curve $c:[0,1]\goesto \hat\G$. By the usual formula for an
  associated connection, $s$ has to be constant along $c$, so
  $s(u)=s(u')\in V$, where $u:=c(0)\in\hat\G_x$ and
  $u':=c(1)\in\hat\G_{x'}$. By $G$--equivariancy $s(\hat\G_x)$
  coincides with the $G$--orbit of $s(u)$, and the same is true for
  $s(\hat\G_{x'})$.
\end{proof}

Hence any parallel section of $\mc{V}$ canonically determines a
$G$--orbit $\Cal O\subset V$, which we will refer to as its
\textit{$G$--type}. Of course, $\G\x_P\Cal O$ is then a subbundle of
$\mc{V}$ and this inclusion is compatible with the natural
connections. Thus a parallel section of $\mc{V}$\ of $G$-type $\mc{O}$
is the same as a holonomy reduction of $(\G,\om)$\ of this $G$--type.

\subsection{The canonical $P$-type decomposition induced 
by a holonomy reduction}\label{P-types}

So far our description of holonomy reductions did not take into account
that the principal $G$-bundle $\hat \G$\ is the extended bundle
of the Cartan bundle $\G$. Since $\hat\G=\G\times_P G$
is the quotient of $\G\times G$\ by the $P$-right action
\begin{align*}
  (u,g)\cdot p=(up,p^{-1}g), u\in\G,g\in G,p\in P,
\end{align*}
there is a canonical embedding $\G\embed\hat\G$ which maps
$u\in\G$\ to $(u,e)\cdot P\in\hat\G$.
This canonical $P$-subbundle of $\hat\G$\ gives rise
to a pointwise invariant that is specific to holonomy reductions
of Cartan geometries:
\begin{definition*}\label{def-decomp}
  Let $(\G\to M,\om)$ be a Cartan geometry of type $(G,P)$ together
  with a holonomy reduction of type $\mc{O}$ described by
  $s:\hat\G\goesto \mc{O}$.  Then for a point $x\in M$ the
  \textit{$P$--type of $x$ with respect to $s$} is the $P$-orbit
  $s(\G_x)\subset\mc{O}$.
\end{definition*}
  
We denote by $\PO$ the set of $P$--orbits in $\Cal O$. Then for a
Cartan geometry $(\G\to M,\om)$ endowed with a holonomy reduction of
type $\Cal O$, the base manifold $M$ decomposes into a disjoint union
according to $P$--type as
  \begin{align*}
    M=\bigcup\limits_{i\in \PO}M_i.  
  \end{align*}
We term the components $M_i$ {\em curved orbits} for a reason that
will shortly be obvious.

\subsection{The $P$-type decomposition of the homogeneous model}\label{P-type-homog}  
  We now study the $P$-type decomposition on the homogeneous model for
  a given holonomy reduction of $G/P$\ of type $\Cal O$.
  The extended bundle $\hat\G=G\times_P G$ can be
  canonically trivialised via 
  \begin{align}\label{homtriv}
    &G/P\times G\goesto G\times_P G \notag \\
    &(gP,g')\mapsto [g,g^{-1}g']_P.
  \end{align}
  Using this, the $G$-equivariant map $s:\hat\G\goesto\mc{O}$ defining
  a holonomy reduction corresponds to a smooth map $G/P\times G\goesto
  \mc{O}$, which by $G$--equivariancy is determined by its restriction
  to $(G/P)\x\{e\}$. Moreover, the extended principal connection
  $\hat\om$ in this case is just the flat connection coming from this
  trivialisation. In particular, any curve $c:[0,1]\goesto G/P\times
  G$ of the form $c(t)=(\bar c(t),e)$ is horizontal, so the map
  $s:G/P\times G\goesto \mc{O}$ has to be given by
  $s(gP,g')=(g')^{-1}\cdot\al$ for some fixed element $\al\in\Cal O$.
  Fixing a holonomy reduction of $G/P$ (of type $\Cal O$) thus amounts
  to fixing an element $\al=s(eP,e)\in\Cal O$, and we denote
  $H=G_{\al}\subset G$ the isotropy group of this element. In
  particular, we can then identify $\Cal O$ with $G/H$.
  
  To determine the $P$-type of $x=gP\in G/P$ with respect to $s$, we
  observe that in the trivialisation \eqref{homtriv} the fibre $\Cal
  G_{gP}\subset\hat\G_{gP}=\{gP\}\x G$ is just $\{gP\}\x \{gb:b\in
  P\}$.  It follows by equivariancy that $s(\G_x)$ is the $P$--orbit
  $Pg^{-1}\cdot s(eP,e)=Pg^{-1}H\in P\backslash G/H=\PO$.  The map
  $G/P\goesto \PO$ which sends each point to its $P$--type thus
  factors to a bijection  
\begin{align}\label{Modeltypemap}
  H\backslash G/P&\goesto \PO=P\backslash G/H,\\ \notag HgP&\mapsto
  Pg^{-1}H
  \end{align}
  of double coset spaces, compare with Proposition 2.13 of
  \cite{CGH-projective}.
  
  This shows that for $M=G/P$, we get $M_{Pg^{-1}H}=HgP/P=H\cdot
  (gP/P)\subset G/P$. Hence the decomposition of $G/P$ according to
  $P$--type with respect to the holonomy reduction determined by
  $\al\in\Cal O$ coincides with the decomposition of $G/P$ into orbits
  under the action of the subgroup $H=G_\al\subset G$.  Now it is
  clear that each of the $H$-orbits naturally shows up as the
  homogeneous model of a Cartan geometry. The stabiliser
  of a point $gP\in G/P$ in $H$ of course is $H\cap gPg^{-1}$ and
  hence $HgP \cong H/(H\cap gPg^{-1})\cong (g^{-1}H g)/(g^{-1}H g\cap
  P)$. 

\begin{remark*}
It looks as if there were many different holonomy reductions of type
$\Cal O$ of the homogeneous model. This is true, but they are all
related by the action of $G$. For our purposes, the main difference
between these reductions is the $P$--type of the origin $eP\in G/P$. Up
to $G$--action, there is only one holonomy reduction of type $\Cal O$
of $G/P$, whence we will talk about \textit{the} model of holonomy
reductions of type $\Cal O$.
\end{remark*}

\subsection{Curved orbit decomposition and induced Cartan
  geometries}\label{2.5}

Consider a $G$--homogeneous space $\Cal O$ and two elements
$\al,\al'\in\Cal O$. If $\al'=g\cdot\al$, then the stabilisers are
conjugate, so $G_{\al'}=gG_\al g^{-1}$. If we in addition assume that
$\al$ and $\al'$ lie in the same $P$--orbit in $\Cal O$, then we can
choose $g\in P$, and thus $gPg^{-1}=P$. Consequently, putting
$P_\al:=G_\al\cap P$ and likewise for $\al'$, we see that
$P_{\al'}=gP_\al g^{-1}$. Thus we see that $(G_\al,P_\al)$ is
isomorphic to $(G_{\al'},P_{\al'})$ as pair consisting of a group endowed with a
distinguished subgroup. In the formulation of our main result below,
given a $P$--orbit $i\in\PO$, we will denote by $(H_i,P_i)$ an
abstract representative of this isomorphism class of groups with a
distinguished subgroup.

\begin{thm*}
  Let $(\G,\om)$\ be a Cartan geometry of type $(G,P)$ which is
  endowed with a holonomy reduction of type $\mc{O}$. Consider a
  $P$--orbit $i\in \PO$ such that the curved orbit $M_i$ is
  non--empty, and consider the corresponding groups $P_i\subset H_i$
  as discussed above. Then
\begin{enumerate}[(i)]
\item \label{thm-curvedorbits-orb} 
  Choose a representative $\al\in\Cal O$ for the Orbit $i$, let
  $G_\al\in G$ be its stabiliser and consider the holonomy reduction
  of the homogeneous model $G/P$ determined by $\al$ as in Section
  \ref{P-type-homog}. Then for each $x\in M_i$ there exist
  neighbourhoods $N$ of $x$ in $M$, and $N'$ of $eP$ in $G/P$ and a
  diffeomorphism $\ph:N\goesto N'$ with $\ph(x)=eP$ and $\ph(M_i\cap
  N)=(G_\al\cdot eP)\cap N'$. In particular, $M_{i}$ is an initial
  submanifold of $M$.
\item \label{thm-curvedorbits-cart} $M_i$ carries a canonical Cartan
  geometry $(\G_i\goesto M_i,\om_i)$ of type $(H_i,P_i)$. 

  Choosing a representative $\al$ for $i\in\PO$ as in (i) and
  identifying $(H_i,P_i)$ with $(G_\al,P_\al)$, we obtain an embedding
  of principal bundles $\mb{j}_{\al}:\G_i\to\Cal G|_{M_i}$ such that
  $\mb{j}_{\al}^*\om=\om_i$. Thus $(\Cal G_i,\om_i)$ can be realised
  as a $P_{\al}$--subbundle in $\Cal G|_{M_i}$ on which $\om$ restricts to a
  Cartan connection of type $(G_\al,P_\al)$.
\item \label{thm-curvedorbits-curv} For the embedding $\mb{j}_{\al}$
  from (ii), the curvatures $K$ of $\om$ and $K_i$ of $\om_i$ are
  related as 
  \begin{align*}
    K_i=\mb{j}_{\al}^* K.
  \end{align*}
In particular, if $\om$ is torsion free, so is $\om_i$. 

  Likewise, let $\ka$ and $\ka_i$ denote, respectively, the curvature
  functions of the two Cartan connections. Then $\ka(\mb{j}_{\al}(u))$
  maps $\La^2(\frak g_\al/(\frak g_\al\cap \frak p)$ to $\frak g_\al$
  and its restriction to this subspace coincides with $\ka_i(u)$.
\end{enumerate}
\end{thm*}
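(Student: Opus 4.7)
The central device is a \emph{comparison map} with the homogeneous model, in the spirit of \cite{CGH-projective}. Fix a representative $\al\in\mc{O}$ of the orbit $i$, so that $H_i=G_\al$ and $P_i=G_\al\cap P$, and write $\bar s=s\circ\iota:\G\to\mc{O}$ for the restriction of $s$ along the canonical inclusion $\iota:\G\embed\hat\G$, $u\mapsto[u,e]_P$. Given $x\in M_i$, the defining condition $s(\G_x)=P\cdot\al$ allows a choice of $u_0\in\G_x$ with $\bar s(u_0)=\al$. For $X\in\g$ let $\tilde X=\om^{-1}(X)\in\X(\G)$ denote the corresponding constant vector field and, on a small neighbourhood $V\subset\g$ of $0$, consider the maps
\[
  \Phi:V\to\G,\ \Phi(X)=\Fl_1^{\tilde X}(u_0),
  \qquad
  \Phi^G:V\to G,\ \Phi^G(X)=\exp(X).
\]
Both are local diffeomorphisms, with differentials at $0$ equal to $\om_{u_0}^{-1}$ and $(\om^{MC}_e)^{-1}$ respectively; projecting to the bases yields a diffeomorphism $\ph$ from a neighbourhood $N\ni x$ in $M$ onto a neighbourhood $N'\ni eP$ in $G/P$.

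The cornerstone is the identity
\[ \bar s(\Phi(X))=\exp(-X)\cdot\al, \]
the natural curved analog of $\bar s^{\mathrm{mod}}(\exp(X))=\exp(-X)\cdot\al$ from \ref{P-type-homog}. To prove it, I would decompose the pushforward $T\iota\,(\tilde X)$ into its $\hat\om$-horizontal and vertical parts along the flow line: parallelism of $s$ kills the horizontal part, while $G$-equivariance converts the vertical $\ze_X$ into the infinitesimal action $-X\cdot s$; integrating the resulting ODE $\dot{\bar s}=-X\cdot\bar s$ yields the formula. With it in place, (i) is straightforward: a short computation gives $\bar s(\Phi(X)\cdot p)=p^{-1}\exp(-X)\al$, and combined with the model identification $M_i^{\mathrm{mod}}=H\cdot eP$ from \ref{P-type-homog}, the condition $y=\pi(\Phi(X))\in M_i$ is equivalent to $\exp(X)\in HP$, hence to $\ph(y)\in H\cdot eP=M_i^{\mathrm{mod}}$. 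Since the $H$-orbit $H\cdot eP$ is an initial submanifold of $G/P$, this local model structure transfers to $M_i\subset M$.

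For (ii), set $\G_i:=\bar s^{-1}(\al)\subset\G$. The same horizontal/vertical decomposition of $ds$, applied pointwise rather than along flow lines, yields $d\bar s_u(\eta)=-\om_u(\eta)\cdot\bar s(u)$; at $u\in\G_i$ this is surjective with kernel $\om_u^{-1}(\g_\al)$, so $\al$ is a regular value of $\bar s$ and $\G_i$ is a smooth embedded submanifold with $T_u\G_i=\om_u^{-1}(\g_\al)$. $P$-equivariance of $\bar s$ forces $up\in\G_i$ (for $u\in\G_i$) iff $p\in P_\al$, so $\G_i\to M_i$ inherits the structure of a $P_\al$-principal bundle. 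With $\mb{j}_\al:\G_i\embed\G|_{M_i}$ the inclusion and $\om_i:=\mb{j}_\al^*\om$, axiom (C.1) restricts from $P$-equivariance of $\om$, (C.2) follows since $P_\al$-fundamental fields are restrictions of $P$-fundamental fields, and (C.3) from the linear-algebraic fact that $\om_u$ restricts to an isomorphism $T_u\G_i=\om_u^{-1}(\g_\al)\to\g_\al$.

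Part (iii) is now formal. From $K=d\om+\tfrac12[\om,\om]$ and naturality of $d$ and $[-,-]$ under pullback,
\[ K_i=d\om_i+\tfrac12[\om_i,\om_i]=\mb{j}_\al^*K, \]
so torsion-freeness of $\om_i$ is immediate from $\p\cap\g_\al=\p_\al$. Likewise, for $X,Y\in\g_\al$ the constant fields $\om^{-1}(X),\om^{-1}(Y)$ lie tangent to $\G_i$, so
\[ \ka_i(u)(X,Y)=K(\om^{-1}(X),\om^{-1}(Y))=\ka(\mb{j}_\al(u))(X,Y)\in\g_\al. \]
The principal obstacle is the careful derivation of the comparison identity -- that is, tracking how parallelism on $\hat\G$ interacts with the $P$-subbundle $\iota(\G)$ and the constant vector fields $\tilde X$ -- after which the geometric content of (i), (ii) and (iii) reduces to direct verification.
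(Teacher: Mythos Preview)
Your strategy is essentially the paper's: derive the identity $\bar s(\Fl_1^{\tilde X}(u_0))=\exp(-X)\cdot\al$ and use it to compare with the homogeneous model. Your ODE derivation of this identity (via $d\bar s_u(\eta)=-\om_u(\eta)\cdot\bar s(u)$) is equivalent to, and arguably cleaner than, the paper's argument, which instead shows directly that the curve $t\mapsto\Fl_t^{\tilde X}(u_0)\cdot\exp(-tX)$ in $\hat\G$ is $\hat\om$-horizontal.

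There is one genuine slip in (i). You take $V$ to be a neighbourhood of $0$ in all of $\g$; then $\pi\circ\Phi:V\to M$ has differential $T_{u_0}\pi\circ\om_{u_0}^{-1}$ at $0$, whose kernel is $\p$, so ``projecting to the bases'' does \emph{not} yield a diffeomorphism $\ph:N\to N'$. The paper fixes a linear complement $\g_-\subset\g$ to $\p$ and restricts $\Phi$ to $W\subset\g_-$; then $\psi=\pi\circ\Phi|_W$ and $\psi'=\pi^G\circ\exp|_W$ are honest local diffeomorphisms and $\ph=\psi'\circ\psi^{-1}$ is well defined. With that correction your argument for (i) goes through verbatim.

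For (ii) you take a somewhat different route than the paper: you use your differential identity to see that $\al$ is a regular value of $\bar s$, giving $\G_i=\bar s^{-1}(\al)$ as a smooth submanifold with $T_u\G_i=\om_u^{-1}(\g_\al)$ directly, whereas the paper establishes the $P_\al$-bundle structure by transporting it from the explicit model computation via the comparison diffeomorphism $\Phi$. Your argument is more self-contained; the paper's has the virtue of simultaneously pinning down the initial-submanifold structure on $M_i$ and its compatibility with $\pi|_{\G_i}$. One point you omit is the canonicity claim: the paper checks that for another representative $\al'=b\cdot\al$ with $b\in P$, the right action $r^{b^{-1}}$ induces an isomorphism $\G_\al\cong\G_{\al'}$ of Cartan geometries, so that $(\G_i,\om_i)$ is independent of the choice of $\al$ up to canonical isomorphism.
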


  The proof of the theorem is based on the following comparison
  method.
  \begin{lemma*}[Comparison]\label{lem-comp}
    Let $(p:\G\to M,\om)$\ and $(p':\G'\to M',\om')$ be Cartan
    geometries of type $(G,P)$ that are endowed with holonomy
    reductions of type $\mc{O}$ described by $s:\hat\G\to\Cal O$,
    respectively $s':\hat\G'\to\Cal O$. Assume that for some
    $\al\in\Cal O$ with $P$--orbit $\bar\al\in\PO$ both curved orbits
    $M_{\bar\al}$ and $M'_{\bar\al}$ are non--empty.  Then for points
    $x\in M_{\bar\al}$ and $x'\in M_{\bar\al}$ we obtain:
    \begin{itemize}
    \item  A diffeomorphism $\phi:N\goesto N'$ from an open
    neighbourhood of $x$ in $M$ to an open neighbourhood of $x'$ in $M'$
    such that $\phi(x)=x'$. 
  \item A $P$-equivariant diffeomorphism
    $\Phi:p^{-1}(N)\goesto(p')^{-1}(N')$ which covers $\phi$ and
    satisfies 
  \begin{align}
      \label{Phipullback}
      s'\o\Phi = s .
    \end{align}
 \end{itemize}
    
 In particular, it follows that
  \begin{align}\label{Typemap}
    \phi(M_i\cap N)=M'_i\cap N'
  \end{align}
  for all $i\in \PO$. 
  \end{lemma*}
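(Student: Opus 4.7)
I would construct $\Phi$ by exploiting the canonical parallelisation of $\G$ afforded by $\om$: the constant vector fields $\om^{-1}(X)$, $X\in\g$, span $T\G$ everywhere and, through their flows, allow one to parametrise neighbourhoods in $\G$. The strategy is to match starting points $u\in\G_x$ and $u'\in\G'_{x'}$ for which $s(u)=\al=s'(u')$ (where I tacitly identify $s,s'$ with their compositions with the canonical inclusions $\G\embed\hat\G$, $\G'\embed\hat\G'$) and then to declare that $\Phi$ sends $\Fl_1^{\om^{-1}(X)}(u)\cdot p$ to $\Fl_1^{(\om')^{-1}(X)}(u')\cdot p$, for $X$ in a small neighbourhood of $0\in\m$ and $p\in P$, where $\m$ is any fixed linear complement to $\p$ in $\g$.

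The first step is to pick such $u$ and $u'$: the hypothesis $x\in M_{\bar\al}$ gives $s(\G_x)=P\cdot\al$, and the $P$-equivariance $s(vp)=p^{-1}\cdot s(v)$ of $s|_\G$ (a consequence of the $G$-equivariance of $s$ on $\hat\G$ together with the definition of the inclusion $\G\embed\hat\G$) yields some $u\in\G_x$ with $s(u)=\al$; one chooses $u'$ analogously. Next, set
\begin{align*}
  \Psi:\m\x P\goesto\G,\qquad(X,p)\mapsto\Fl_1^{\om^{-1}(X)}(u)\cdot p,
\end{align*}
and $\Psi'$ analogously. Using properties (C.2) and (C.3) of the Cartan connection, the differential of $\Psi$ at $(0,e)$ is the linear isomorphism $(X,Y)\mapsto\om_u^{-1}(X+Y)$ from $\m\oplus\p=\g$ onto $T_u\G$, so by the inverse function theorem and $P$-equivariance $\Psi$ restricts to a diffeomorphism of some $V\x P\subset\m\x P$ onto a $P$-invariant open neighbourhood $U$ of $u$ in $\G$; likewise $\Psi'$ gives $U'\subset\G'$. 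Setting $\Phi:=\Psi'\o\Psi^{-1}$ then yields a $P$-equivariant diffeomorphism $U\goesto U'$ with $\Phi(u)=u'$ that descends to a diffeomorphism $\phi:N\goesto N'$ with $\phi(x)=x'$.

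The crux step is to verify $s'\o\Phi=s$. For this, I would invoke that any $G$-equivariant map $s:\hat\G\to\mc{O}$ corresponding to a parallel section satisfies the differential identity
\begin{align*}
  ds_{\hat u}(\xi)=-\hat\om(\xi)\cdot s(\hat u)\quad\text{for all }\xi\in T_{\hat u}\hat\G,
\end{align*}
where the dot is the infinitesimal $G$-action on $\mc{O}$; this is verified by decomposing $\xi$ into horizontal and vertical parts. Applied to the curve $\gamma(t)=\Fl_t^{\om^{-1}(X)}(u)$ pushed into $\hat\G$ via $i:\G\embed\hat\G$, and using $i^*\hat\om=\om$, this yields the ODE $\ddt s(i(\gamma(t)))=-X\cdot s(i(\gamma(t)))$ with initial value $\al$; its solution at $t=1$ is $\exp(-X)\cdot\al$. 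The identical ODE and initial value arise on the $\G'$-side, so the endpoints coincide, proving $s'\o\Phi=s$ on the slice $\Psi(\m\x\{e\})$; $P$-equivariance of $s,s',\Phi$ then extends the identity to all of $U$. The relation \eqref{Typemap} follows at once, since $\Phi$ preserves fibres and $s$-values, hence $P$-orbits. The main obstacle here is precisely that flow lines of the constant vector fields in $\G$ are \emph{not} $\hat\om$-horizontal in $\hat\G$, so $s$ does not stay constant along them; the resolution is that its variation is governed by a universal ODE depending only on $X$ and the current value of $s$, which therefore produces identical endpoints once $u$ and $u'$ are matched to the same initial value $\al$.
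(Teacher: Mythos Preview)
Your proof is correct and follows essentially the same route as the paper's: both choose base points $u,u'$ with $s(u)=\al=s'(u')$, parametrise neighbourhoods via flows of the constant vector fields $\om^{-1}(X)$ for $X$ in a complement to $\p$, and define $\Phi$ by matching these normal-coordinate charts. The one cosmetic difference is in deriving the key identity $s(\Psi(X))=\exp(-X)\cdot\al$: the paper observes that $t\mapsto\Fl_t^{\om^{-1}(X)}(u)\cdot\exp(-tX)$ is $\hat\om$-horizontal (since $\Ad(\exp(tX))X-X=0$) and hence $s$ is constant along it, whereas you derive the ODE $\tfrac{d}{dt}s(\gamma(t))=-X\cdot s(\gamma(t))$ directly from the parallel condition and solve it; these are two packagings of the same computation.
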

  We remark that some of the intersections $M_i\cap N$ may be empty.

  \begin{proof}[Proof of lemma \ref{2.5}]
    The proof is based on an adapted version of normal coordinates for
    Cartan geometries. For this we fix a linear subspace
    $\g_-\subset\g$ which is complementary to the subspace
    $\p\subset\g$. For $X\in\g_-$, we denote by $\tilde X\in\X(\G)$
    the vector field characterised by $\om(\tilde X)=X$. Next, take a
    point $u\in\G_x$ such that $s(u)=\al$, and consider the flow
    $\Fl^{\tilde X}_1(u)$ of $\tilde X$ starting in $u$ up to time 1.
    This is defined for $X$ in a neighbourhood $W$ of zero in $\g_-$
    and, possibly shrinking $W$, $\Psi(X):=\Fl_1^{\tilde X}(u)$ defines a
    smooth map $W\to \G$ such that $\psi:=p\o\Psi$ is a diffeomorphism
    from $W$ onto an open neighbourhood $N$ of $x=p(u)$ in $M$. These
    are the local normal coordinates around $x$ determined by $u$.
    
    Next, we define a local section $\tau:N\goesto\G$ by 
    $\tau(\psi(X)):=\Psi(X)$, and an adapted local section
    $\hat\tau:N\goesto\hat\G$ by $\hat\tau(\psi(X)):=\Psi(X)\cdot\exp(-X)$.

 Then $\hat\tau$ has the property that
 for fixed $X\in\g_-$ the curve $c$ defined by 
$$
c(t):=\hat\tau(\psi(tX))=\Psi(tX)\cdot\exp(-tX)=\Fl_t^X(u)\cdot\exp(-tX)
$$
(for sufficiently small $t$) is horizontal for the principal
connection $\hat\om$. Indeed, we have
$$
    \hat\om (c'(t))=\Ad(\exp(tX))X-X=0. 
    $$
    But since $s:\hat\G\goesto\mc{O}$ is constant along horizontal
    curves we conclude that for $X\in W$ we get
    $$
    \al=s(c(0))=
    s(c(1))=s(\hat\tau(X))=s(\Psi(X)\cdot\exp(-X)),
$$
  and by $G$-equivariancy, we obtain  
  \begin{equation}\label{Ptype-formula}
    s(\Psi(X))=\exp(-X)\cdot\al.
  \end{equation}
  
  Now we can perform the same construction for $(p':\G'\to M',\om')$
  and a point $u'\in\G'_{x'}$ such that $s'(u')=\al$. Shrinking the
  neighbourhoods of zero in $\g_-$ appropriately, we may assume that
  $W'=W$, and put $\phi:=\psi'\circ \psi^{-1}:N\goesto N'$, so
  $\phi(x)=x'$. Further there evidently is a unique $P$-equivariant
  diffeomorphism $\Phi:p^{-1}(N)\goesto (p')^{-1}(N')$ such that
  $\Phi\o\tau=\tau'$ and by construction, this covers $\phi$.

  Since $s'(\Psi'(X))=\exp(-X)\cdot\al=s(\Psi(X))$ one immediately
  obtains \eqref{Phipullback}.  For the last claim, recall that by
  definition $y\in M_i$ is equivalent to the fact that $s(\G_y)$ is
  the orbit $i$. For $v\in\G_y$ put $v'=\Phi(v)\in\G'_{\ph(y)}$ we
  then have by \eqref{Phipullback} that $s'(v')=s(v)$ and therefore
  $y$ and $\ph(y)$ have the same $P$--type.
\end{proof}

\begin{proof}[Proof of Theorem \ref{2.5}]
  We choose a representative $\al$ for the orbit $i\in\PO$ and apply
  Lemma \ref{lem-comp} to the case where $M'=G/P$ is the homogeneous
  model of type $(G,P)$ with the holonomy reduction determined by
  $\al\in\Cal O$, so $eP\in M'_i$. Let $\phi:N\goesto N'$,
  $\Phi:p^{-1}(N)\goesto(\pi')^{-1}(N')$ be the maps constructed in
  the lemma for the given point $x\in M_i$ and $x'=eP\in (G/P)_i$.
  
  Claim (i) immediately follows from the fact that
  $(G/P)_{\bar\al}=G_\al\cdot eP\subset G/P$ is a $G_\al$-orbit
  (recall \eqref{Modeltypemap}) and from formula \eqref{Typemap}. The fact
  that orbits are initial submanifolds is well known, see e.g.~Theorem
  5.14 in \cite{KMS}.
  
  To prove (ii), observe first that via the inclusion $j:M_i\embed M$,
  we can pull back $\G$ and $\hat\G$ to a principal $P$--bundle
  respectively a principal $G$--bundle over $M_i$. In view of the
  discussion in \ref{sec-holon}, the reduction of $\hat\G$ determined
  by $s$ can be described as the pre--image
  $\hat\G_{\al}=s^{-1}(\al)\subset\hat\G$, and we define
  $$
  \G_{\al}:={j}^{*}(\hat\G_{\al})\cap j^*\G.
  $$
  We claim that this is a principal bundle with structure group
  $P_\al=G_\al\cap P$ over $M_i$.  This is again proved by comparison
  to the homogeneous model $M'=G/P$ and its holonomy reduction
  determined by $\al$: In Lemma \ref{lem-comp} we (locally)
  constructed a $P$-bundle map $\Phi:p^{-1}(N)\goesto (p')^{-1}(N')$
  such that $s'\o\Phi=s$. Hence it clearly suffices to prove that
  $\G'_{\al}$ is a $P_\al$--principal subbundle over $M'_{\bar\al}$.
  
  But on the homogeneous model we can use the trivialisation
  $\hat\G'=G\times_P G\cong (G/P)\times G$, and there we simply have
  $\hat\G'_{\al}=(G/P)\times G_\al$.  Therefore
  $(j')^{*}(\hat\G'_{\al})=(G_\al/P_\al)\times G_\al$\ and
  $(j')^{*}(\hat\G'_{\al})\cap (j')^{*} \G'=(G_\al/P_\al)\times
  P_\al$.  In particular this shows that the intersection
  $(j')^{*}(\hat\G'_{\al})\cap (j')^{*} \G'$ is a $P_\al$-subbundle
  of the $P$-bundle $(j')^{*} \G'$ over $M'_{\bar\al}=G_\al\cdot eP$.
  Therefore also $\G_{\al}$ is an $P_{\al}$--principal subbundle of
  $j^*\G$ over $M_i$. 

We next claim that $j^*\om$ defines a Cartan connection of type
$(G_\al,P_\al)$\ on $\G_{\al}$. For this, first note that the extended
$G$-principal connection form $\hat\om\in\Om^1(\hat\G,\g)$\ has values
in $\frak g_\al$\ on $\hat\G_{\al}$, and in particular
$(\om_{\al})_u(T_u\G_{\al})\subset\frak g_\al$\ for all
$u\in\G_{\al}$.  But since $(j^*\om)_u$\ is injective on $T_u(j^*\G)$
a simple counting of dimensions shows that
$(j^*\om)_u:T_u\G_{\al}\goesto\frak g_\al$ is a linear isomorphism,
which yields (C.3).  The necessary equivariance (C.1) and reproduction
(C.2) properties follow immediately from those of $\om$ by
restriction.
  
  Now let $b\in P$ and $\al'=b\cdot\al$ be another point in
  $P\cdot\al=i\in\PO$. We know that $\G_{\al}$\ and $\G_{\al'}$ are
  principal subbundles of $\G$ with structure group $P_{\al}$ and
  $P_{\al'}$, respectively to which $\om\in\Om^1(\G,\g)$ restricts
  nicely. Then one immediately checks that the restriction of the
  principal right action $r^{b^{-1}}$ induces an isomorphism between
  the two principal subbundles (which is equivariant over the
  isomorphism $P_\al\cong P_{\al'}$ induced by
  conjugation). Equivariancy of $\om$ further implies that this
  isomorphism is compatible with the induced Cartan connections (where
  we identify $\frak g_\al$ and $\frak g_{\al'}$ via the isomorphism
  induced by $\Ad(b^{-1})$). Hence we can view the result of our
  construction as a canonical Cartan geometry $(\Cal G_i\to
  M_i,\om_i)$ together with an inclusion $\mb{j}_{\al}$ induced by the
  choice of a representative $\al$ of $i\in\PO$ as claimed in (ii),
  which completes the proof of this part.
  
  The first part of (iii) then follows immediately from the definition
  of the Cartan curvature $K=d\om+\frac{1}{2}[\om,\om]$\ of $\om$ and
  pullback via $\mb{j}_{\al}$, while the second part is just the
  obvious restatement of this in terms of the curvature functions.
\end{proof}

\subsection{Parabolic geometries and normal solutions of BGG
  equations}\label{3.1}
We now consider special holonomy reductions for Cartan geometries of
type $(G,P)$ with $G$ a semisimple Lie group and $P\subset G$ a
parabolic subgroup. In this case the Lie algebra $\g$\ has a natural
grading
$$
\g=\underbrace{\g_{-k}\oplus\cdots \oplus\g_{-1}}_{\g_-}\oplus
\underbrace{\g_0 \oplus \g_1\oplus \cdots\oplus \g_k}_{\p},
$$ such that $\g_{-1}$\ generates $\g_-$.  This class of Cartan
geometries is particularly interesting from several points of
view. First the class includes many structures already studied
intensively, such as conformal geometry, projective geometry, and
(hypersurface type) CR geometry.  Second, for every structure in the
class one has canonical regularity and normality conditions on the
Cartan connection $\om$ which lead to Cartan geometries which are
equivalent (in a categorical sense) to underlying geometric
structures. Finally via the canonical Cartan connection and related
calculus the structures in the class admit the application of
efficient tools from representation theory to geometric problems; the
relevant representation theory is far from trivial, but is very well
studied. For extensive background on this class of geometries we refer
to \cite{cap-slovak-par}.
  
For a parabolic geometry of type $(G,P)$, \cite{BGG-2001} introduced a
construction for a natural sequence of linear differential operators
that was then simplified in \cite{BGG-Calderbank-Diemer}.  For each
tractor bundle $\mc{V}=\G\times_P V$, with $V$ irreducible for $G$,
one obtains the \textit{generalised BGG-sequence}
$$
  \Ga(\mc{H}_0)\overset{\Th^V_0}{\goesto}\Ga(\mc{H}_1)\overset{\Th^V_1}{\goesto}\cdots
  \overset{\Th^V_{n-2}}{\goesto}
  \Ga(\mc{H}_{n-1})\overset{\Th^V_{n-1}}{\goesto}\Ga(\mc{H}_n).
  $$
  Here each $\mc{H}_k$ is a certain subquotient bundle of the
  bundle $\La^k T^*M\t \mc{V}$ of $\mc{V}$--valued $k$--forms, and
  each $\Th^V_i$ is a linear differential operator intrinsic to the
  given geometry.
  
  We are mainly interested in the operator $\Th^V_0$, which defines an
  overdetermined system and is closely related to the tractor
  connection $\na$ on $\mc{V}$.  The parabolic subgroup $P\subset G$
  determines a filtration on $V$ by $P$--invariant subspaces. We only
  need the largest non--trivial filtration component $V^0\subset V$.
  Then $\mc{H}_0$ is simply the quotient of $\mc{V}/\mc{V}^0$, and we
  denote by $\Pi:\Ga(\mc{V})\to\Ga(\mc{H}_0)$ the natural projection.
  
  It turns out that the bundle map $\Pi$ can be used to identify
  parallel sections of $\mc{V}$, with special solutions of the first
  BGG operator $\Th^V_0$, which are then called \textit{normal
    solutions}.  More precisely, one has:

\begin{thm*}\label{normp}
  Let $\mc{V}$ be a $G$-irreducible tractor bundle on $M$. The bundle map
  $\Pi$ induces an injection from the space of parallel sections of
  $\mathcal{V}$ to a subspace of $\Ga(\mathcal{H}_0)$ which is
  contained in the kernel of the first BGG operator
$$
\Th^V_0 : \Ga(\mc{H}_0)\to\Ga(\mc{H}_1) .
$$
\end{thm*}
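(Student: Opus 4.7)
The plan is to deduce the statement from the existence of the \emph{BGG splitting operator} $L : \Ga(\mc{H}_0) \to \Ga(\mc{V})$ produced in \cite{BGG-2001} and refined in \cite{BGG-Calderbank-Diemer}. First I would recall the algebraic bundle map $\pa^*$ on $\La^\bullet T^*M \t \mc{V}$ induced by the Kostant codifferential of Lie algebra chains (using that $T^*M$ is associated to the $P$-module $(\g/\p)^*$). By definition the bundles $\mc{H}_k$ are the subquotients $\Ker\pa^*/\im\pa^*$ of $\La^k T^*M\t \mc{V}$, and in particular $\mc{H}_0 = \mc{V}/\mc{V}^0$ with $\Pi$ the quotient projection.

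Next I would invoke the defining property of $L$ without reproving it: $L$ is the unique differential operator $\Ga(\mc{H}_0) \to \Ga(\mc{V})$ satisfying both $\Pi\circ L = \Id$ and the normalisation $\pa^*(\na L(\si)) = 0$ for every $\si \in \Ga(\mc{H}_0)$. In terms of this splitting the first BGG operator factorises as $\Th^V_0(\si) = q(\na L(\si))$, where $q : \Ker\pa^* \to \Ker\pa^*/\im\pa^* = \mc{H}_1$ is the natural projection onto the first BGG bundle.

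Granted this, both conclusions of the theorem are essentially immediate. For any parallel $s \in \Ga(\mc{V})$ the section $\na s$ is identically zero and hence trivially lies in $\Ker\pa^*$; since $s$ is a lift of $\Pi(s)$ meeting the normalisation condition, the uniqueness of $L$ applied to $\si := \Pi(s)$ forces $s = L(\Pi(s))$. Injectivity of $\Pi$ on parallel sections follows at once, since $\Pi(s) = 0$ now gives $s = L(0) = 0$. Likewise $\Th^V_0(\Pi(s)) = q(\na L(\Pi(s))) = q(\na s) = 0$, so $\Pi(s)$ indeed lies in $\Ker \Th^V_0$ as claimed.

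The real content of the theorem, and hence the only genuine obstacle, is the existence, uniqueness, and differential character of $L$. This rests on a Hodge-type decomposition of $\La^\bullet T^*M\t\mc{V}$ with respect to $\pa^*$ and its algebraic adjoint, together with an inductive correction of a chosen initial lift of $\si$ that successively cancels the $\mc{V}^0$-components across the filtration of $\mc{V}$, each step being solvable because the relevant Kostant Laplacian is invertible on $\im\pa^*$. Since this is carried out in detail in \cite{BGG-2001, BGG-Calderbank-Diemer}, the present assertion becomes a clean corollary.
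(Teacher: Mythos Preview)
Your argument is correct and is precisely the standard deduction of this fact from the splitting operator machinery of \cite{BGG-2001,BGG-Calderbank-Diemer}. Note, however, that the paper does not supply its own proof of this statement: it is quoted as a result of \cite{BGG-2001} and used as a black box, so there is nothing to compare against beyond observing that your sketch is exactly the argument one finds in those references.
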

\begin{proof}
If $s\in\Ga(\Cal V)$ is parallel, then since $\partial^*$ is identically 
zero on $ \Ga(\Cal V)$, part (2) of Lemma 2.7 of \cite{BGG-2001} shows that 
$s=L(\Pi(s))$, where $L$ is the so called splitting operator. This implies
injectivity and since $\Th^V_0(\Pi(s))$ by definition is a projection of 
$\nabla L(\Pi(s))$, we see that $\Pi(s)$ is in the kernel of $\Th^V_0$. 
\end{proof}

Let $U\subset V$ be a $P$--invariant subspace. Then the associated
bundle $\mc{U}$ is a subbundle of $\mc{V}$. For a normal solution
$\si$ of $\Th^V_0$ let $s\in\Ga(\mc{V})$ be the parallel section such
that $\Pi(s)=\si$. Define $\mc{Z}^U(\si):=\{x\in
M:s(x)\in\mc{U}_x\subset\mc{V}_x\}$. Of course, this is just the zero
set of the section of $\mc{V}/\mc{U}$ obtained by projecting $s$ to
the quotient. Note that for $U=V^0\subset V$, the largest proper
filtration component, we get $\mc{Z}^{V^0}(\si)=\mc{Z}(\si)$, the zero
set of $\si$. For other proper filtration components $U\subset V$ we
have $U\subset V^0$ and hence $\mc{Z}^U(\si)\subset \mc{Z}(\si)$ can
be viewed as a space of ``higher order zeros'' of $\si$. This point of
view can be made precise using the fact that $s$ can be described as
the image of $\si$ under a linear differential operator
\cite{BGG-2001,BGG-Calderbank-Diemer}. More generally, for
$P$--invariant subspaces $U\subset U'\subset V$ one has
$\mc{Z}^U(\si)\subset\mc{Z}^{U'}(\si)$. This typically yields a
stratification of the zero set of $\si$, examples of which
were given in \cite{CGH-projective}. 

The parallel section $s$ of $\mc{V}$ gives rise to a holonomy
reduction of type $\Cal O$ for some $G$--orbit $\Cal O\subset V$,
called the $G$--type of $s$. We will also refer to the orbit
$\mc{O}\subset V$ as the $G$-type of the normal solution
$\si=\Pi(s)$. According to Definition \ref{def-decomp} the holonomy
reduction provides a curved orbit decomposition
$M=\bigcup\limits_{i\in \PO}M_i$. We will also refer to the $P$--type
of $x\in M$ as the \textit{$P$--type with respect to $\si$}.  For a
$P$--invariant subspace $U\subset V$, the subspace $U\cap\Cal O$ is
$P$--invariant, so it is a union of $P$--orbits. Clearly, we have
\begin{equation}
  \label{equ-zerodecomp}
  \mc{Z}^U(\si)=\bigcup\limits_{i\in P\backslash(U\cap \mc{O})}M_i. 
\end{equation}

As for holonomy reductions, we can describe all normal solutions on
the homogeneous model $G/P$ for some given $G$--type $\mc{O}\subset
V$: For any element $v\in\mc{O}$, the $P$--equivariant function
$G\goesto V$ defined by $g\mapsto g^{-1}v$ defines a parallel section
of $\mc{V}$. Via the trivialisation $\hat G=G\times_P G\cong G/P\times
G$ it is easy to see that every parallel section of $\mc{V}$ is
obtained in that way, and it turns out that the space of parallel
sections surjects onto the kernel of $\Th^V_0$ on the homogeneous
model, i.e.~all solutions are normal in this case.

Our results on curved orbit decompositions now easily imply that
locally, all possible forms of the sets $\mc{Z}^U(\si)\subset M$
already show up on the homogeneous model $G/P$.
\begin{prop*}
  Let $\si$ be a normal solution of $\Th_0^V$ on $(\G\goesto M,\om)$
  of $G$-type $\mc{O}\subset V$ and let $x\in M$ be any point. Then
  there is a (normal) solution $\si'$ on $(G\goesto G/P,\om^{MC})$ for
  which $eP\in G/P$ has the same $P$--type with respect to $\si'$ that
  $x$ has with respect to $\si$. Further, there are open neighbourhoods
  $N$ of $x$ in $M$ an $N'$ of $eP$ in $G/P$ and there is a
  diffeomorphism $\ph:N\goesto N'$, such that $\ph(x)=\ph(x')$ and
  $\ph(\mc{Z}^U(\si)\cap N)=\mc{Z}^U(\si')\cap N'$ for any
  $P$--invariant subspace $U\subset V$.
\end{prop*}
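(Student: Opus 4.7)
The plan is to reduce the proposition to a direct application of the Comparison Lemma \ref{lem-comp}. First I would use Theorem \ref{normp} to promote the normal solution $\si$ to the (unique) parallel section $s\in\Ga(\mc V)$ with $\Pi(s)=\si$; viewed as a $G$--equivariant function $s:\hat\G\goesto\mc O$ in the sense of \ref{partrac}, $s$ is precisely the holonomy reduction of type $\mc O$ associated to $\si$. Let $i\in\PO$ denote the $P$--type of $x$ and fix a representative $\al\in i$.

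Next I would build the homogeneous--model comparison partner. As recalled in \ref{P-type-homog}, the element $\al\in\mc O$ singles out a holonomy reduction of $(G\goesto G/P,\omM)$: the associated parallel section of the model tractor bundle corresponds to the $P$--equivariant map $G\goesto V$, $g\mapsto g^{-1}\al$. Let $s':\hat G\goesto\mc O$ be the resulting $G$--equivariant function and put $\si':=\Pi(s')$. Because every parallel section of the model tractor bundle arises this way and all solutions of $\Th^V_0$ on the model are normal (as noted just before the proposition), $\si'$ is indeed a normal solution. The computation \eqref{Modeltypemap} shows that the $P$--type of $eP$ with respect to $s'$ is $P\cdot\al=i$, matching that of $x$ with respect to $s$.

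With $x\in M_i$ and $eP\in(G/P)_i$ both non--empty, I would now invoke Lemma \ref{lem-comp} applied to the two Cartan geometries endowed with the holonomy reductions $s$ and $s'$. It supplies neighbourhoods $N$ of $x$ and $N'$ of $eP$, a diffeomorphism $\ph:N\goesto N'$ with $\ph(x)=eP$, and a $P$--equivariant diffeomorphism $\Phi:p^{-1}(N)\goesto(p')^{-1}(N')$ covering $\ph$ and satisfying $s'\circ\Phi=s$ on the $P$--subbundles.

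The final identification of the sets $\mc Z^U(\si)$ is then purely formal. For a $P$--invariant subspace $U\subset V$, $P$--invariance means that for $y\in M$ one has $y\in\mc Z^U(\si)$ iff $s(u)\in U$ for some (equivalently every) $u\in\G_y$. Combining this with $s'\circ\Phi=s$ and the $P$--equivariance of $\Phi$ gives, for $y\in N$, that $y\in\mc Z^U(\si)$ iff $\ph(y)\in\mc Z^U(\si')$, i.e.\ $\ph(\mc Z^U(\si)\cap N)=\mc Z^U(\si')\cap N'$. The only substantive ingredient is the Comparison Lemma itself; once it is in place, the proposition follows by unwinding the definition of $\mc Z^U(\si)$ and the equivariance properties built into $\Phi$.
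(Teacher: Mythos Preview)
Your proof is correct and follows essentially the same approach as the paper. The only cosmetic difference is that you invoke the Comparison Lemma \ref{lem-comp} directly and use the relation $s'\circ\Phi=s$ pointwise to transport membership in $\mc{Z}^U$, whereas the paper phrases this via Theorem \ref{2.5} together with the observation \eqref{equ-zerodecomp} that each $\mc{Z}^U(\si)$ is a union of curved orbits; since Theorem \ref{2.5} is itself proved from the Comparison Lemma, the two arguments are the same in substance.
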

\begin{proof}
  Consider the equivariant function $s:\Cal G\to V$ corresponding to
  the parallel section of $\mc{V}$ which induces $\si$. Choose a point
  $u\in\Cal G_x$ and put $v=s(u)\in \mc{O}$. Let $\si'$ be the normal
  solution on $G/P$ determined by the function $g\mapsto g^{-1}\cdot
  v$, the claim about $P$--types follows. Then the result follows
  immediately from Theorem \ref{2.5}, since the set $\mc{Z}^U(\si)$ is
  a union of curved orbits in $M$, while $\mc{Z}^U(\si')$ is the union
  of the corresponding orbits in $G/P$.
\end{proof}

\section{Examples and applications}\label{sec-exmp}

\subsection{Metrics on the projective standard tractor bundle}\label{sec-exmp-proj} 

Let $(M,[D])$ be an oriented smooth $n$-manifold endowed with a
projective equivalence class of torsion-free affine connections.
Hence the equivalence class of $D$ consists of all those torsion-free
affine connections which have the same geodesics as $D$ up to
pa\-ra\-me\-tri\-za\-tion.  It is well known that $D$ and $\hat D$ are
projectively equivalent if and only if there is a $1$--form $\Ups$
such that
\begin{align*}
  \hat D_a\ph_b=D_a\ph_b +\Ups_a\ph_b+\Ups_b\ph_a
\end{align*}
for all $\ph\in\Om^1(M)$, see e.g.~\cite{eastwood-notes}, also for the
notation.

An oriented projective structure can be equivalently described as a
Cartan geometry $(\G,\om)$ of type $(G,P)$, where $G=SL(n+1,\Bbb R)$
and $P\subset G$ is the stabiliser of a ray $\rr_+ X\in\rr^{n+1}$. In
particular, the homogeneous model is the projective $n$--sphere $S^n$,
which is a $2$-fold covering of projective $n$--space $\rr P^n$.  The
bundle associated to the standard representation of $\SL(n+1)$ is the
\textit{standard tractor bundle} $\mc{T}=\G\times_P\rr^{n+1}$. The ray
stabilised by $P$ gives rise to a canonical oriented line subbundle
$\ce(-1)\subset\mc{T}$, whose sections are referred to as projective
$(-1)$--densities.

We consider the holonomy reduction coming from a parallel
non--degenerate metric on $\mc{T}$.  The basic relation between such
metrics and Einstein metrics in the projective class has been observed
in \cite{armstrong-projective1}. The reduction has been studied
further in Section 3.3 of \cite{CGH-projective}, and our main aim here
is to explain the results obtained there from our current perspective,
which is essentially different. This seems important both from the
point of view of comparison and as motivation for subsequent
examples. A bundle metric on $\mc{T}$ can be viewed as a parallel
section of $S^2\Cal T^*$, which as discussed in \ref{partrac} has a
$G$--type. Linear algebra shows that the decomposition of
$S^2{\rr^{n+1}}^*$ into orbits of $\SL(n+1)$ is described by rank and
signature. To deal with the additional distinction by volume in the
non--degenerate case, we will always rescale our metrics by a constant
in such a way that an orthonormal basis has unit volume. Since we
assume our metric to be non--degenerate, $\mc{O}$ consists of all
inner products on $\rr^{n+1}$ which have some fixed signature $(p,q)$
with $p+q=n+1$.  As shown in \ref{partrac}, a parallel tractor metric
is the same as a holonomy reduction of $(\G,\om)$ of type $\mc{O}$.

\begin{thm*}
  Let $(M,[D])$ be a projective structure endowed with a holonomy
  reduction of type $\Cal O$ given by a parallel metric $\mb{h}$ of
  signature $(p,q)$ on the standard tractor bundle $\Cal T$.
  
  (1) The metric $\mb{h}$ determines a normal solution $\si$ of the
  first BGG operator acting on the line bundle $\ce(2)$ of all metrics
  on $\ce(-1)\subset\Cal T$, i.e.
$$ \nabla_{(a}\nabla_{b}\nabla_{c)}\si + 4P_{(ab}\nabla_{c)}\si+
  2\big(\nabla_{(a}P _{bc)}\big)\si=0.
$$ 
  
  (2) The curved orbit decomposition has the form $M=M_+\cup M_0\cup
  M_-$, where $M_\pm\subset M$ are open and $M_0$ coincides with $\Cal
  Z(\si)$ and (if non--empty) consists of embedded hypersurfaces.

  (3) The induced Cartan geometry on $M_+$ (respectively $M_-$) is
  given by a pseudo--Riemannian metric $g_\pm$ of signature $(p-1,q)$
  (respectively $(p,q-1)$) whose Levi--Civita connection lies in the
  projective class.
  
  (4) If $M_0$ is non--empty then it naturally inherits a conformal
  structure of signature $(p-1,q-1)$ via the induced Cartan geometry.
\end{thm*}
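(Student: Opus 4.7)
My plan is to apply the curved orbit theorem \ref{2.5} to the holonomy reduction determined by $\mb{h}$, once (a) the BGG formalism of \ref{3.1} is identified for part (1) and (b) the $P$-orbits on $\mc{O}$ are described explicitly. Since $V=S^2(\rr^{n+1})^*$ is $G$-irreducible, \ref{normp} applies; writing $X$ for a generator of the distinguished ray, the largest proper $P$-invariant subspace of $V$ is $V^0=\{q:q(X,X)=0\}$, and $V/V^0\cong\rr$ carries the $P$-representation to which $\ce(2)$ is associated. The bundle projection $\Pi$ thus corresponds to restricting a tractor metric to $\ce(-1)\otimes\ce(-1)$, and Theorem \ref{normp} identifies $\si:=\Pi(\mb{h})=\mb{h}|_{\ce(-1)\otimes\ce(-1)}\in\Ga(\ce(2))$ as a normal solution of the first BGG operator, establishing (1). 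Since $P$ scales $X$ by a positive character, the sign of $\al(X,X)$ is a $P$-invariant on $\mc{O}$, and linear algebra splits $\mc{O}$ into exactly three $P$-orbits, distinguished by $\al(X,X)>0$, $<0$ and $=0$. These correspond to the strata $M_+$, $M_-$ and, via \eqref{equ-zerodecomp} with $U=V^0$, $M_0=\ZZ(\si)$; the sets $M_\pm$ are open because they are cut out by open sign conditions on $\si$. On the homogeneous model $G/P=S^n$ the $M_0$-orbit is the smooth projective quadric $\{[X]:\al(X,X)=0\}$, of codimension one, so Theorem \ref{2.5}(i) shows $M_0$ is an embedded hypersurface whenever non-empty, completing (2).

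\emph{Parts (3) and (4).} In all three cases $G_\al\cong\SO(p,q)$; the intersection $P_\al=G_\al\cap P$ is the stabiliser in $\SO(p,q)$ of the ray $\rr_+X$, and depends only on whether $X$ is spacelike, timelike or null for $\al$. In the spacelike (resp.~timelike) case $P_\al\cong\SO(p-1,q)$ (resp.~$\SO(p,q-1)$), and the pair $(\SO(p,q),P_\al)$ is the effective pair for the flat model of pseudo-Riemannian geometry of signature $(p-1,q)$ (resp.~$(p,q-1)$). Since the projective class consists of torsion-free connections $\om$ is torsion free; by Theorem \ref{2.5}(iii) so is $\om_\pm$; and a torsion-free Cartan geometry of the above type is equivalent to a pseudo-Riemannian metric together with its Levi-Civita connection, yielding $g_\pm$. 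In the null case $P_{\al_0}$ is the parabolic in $\SO(p,q)$ stabilising a null line, so $(\SO(p,q),P_{\al_0})$ is precisely the effective pair for the flat model of conformal geometry of signature $(p-1,q-1)$, and any Cartan geometry of this type canonically induces a conformal structure of that signature on its base via the standard soldering identification of the tangent bundle with the associated bundle to $\frak{g}_{\al_0}/\frak{p}_{\al_0}$, giving (4).

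\emph{Main obstacle.} The delicate point is (3)'s claim that the Levi-Civita connection of $g_\pm$ lies in the projective class $[D]$. My plan is to observe that $\mb{j}_\al:\G_\al\hookrightarrow\G|_{M_\pm}$ is in particular a reduction of the $P$-bundle $\G|_{M_\pm}$ to the reductive subgroup $P_\al\subset P$, which is transverse to the nilradical of $\p$; by the standard projective Weyl-structure correspondence this singles out an affine connection in the projective class $[D]$. One then verifies that the induced Cartan connection $\om_\pm$ reproduces precisely this affine connection, which preserves $g_\pm$ (because $\G_\al$ is compatibly a $\SO(g_\pm)$-reduction of the frame bundle) and is torsion free; uniqueness of the Levi-Civita connection then forces the identification.
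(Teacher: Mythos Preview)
Your proposal is correct and follows essentially the same route as the paper: identify the three $P$-orbits on $\Cal O$ by the sign of $\al(X,X)$, invoke Theorem \ref{2.5} and the comparison with the homogeneous quadric for (2), and read off the types $(H_i,P_i)$ as $(\SO(p,q),\SO(p-1,q))$, $(\SO(p,q),\SO(p,q-1))$, and $(\SO(p,q),\underline{P})$ for (3) and (4). The one place where you are more explicit than the paper is the ``main obstacle'': the paper simply says that since the induced Cartan geometry is obtained by restricting $\om$, the resulting metric connection lies in the projective class; you unpack this via Weyl structures, observing that $P_\al\subset P$ meets the unipotent radical $P_+$ trivially, so that $\G_\al\hookrightarrow\G|_{M_\pm}$ determines a Weyl structure and hence a distinguished connection in $[D]$, which is then identified with the Levi--Civita connection of $g_\pm$ by torsion--freeness. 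This is the same content, just made more explicit, and it is a useful clarification since ``restriction'' alone leaves implicit the identification of the $\frak p_\al$--component of $\om|_{\G_\al}$ with the $\frak g_0$--component of $\om$ along the associated Weyl structure.
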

\begin{proof}
 The space $\mc{O}$ splits into $P$-orbits as $\Cal O=\Cal O_+\cup\Cal
 O_0\cup\Cal O_-$ according to the restriction of an inner product to
 the distinguished ray $\rr_+ X\in\rr^{n+1}$. On the homogeneous model
 $S^n=G/P$, a parallel section of $S^2\Cal T^*$ is determined by an
 element of $S^2\Bbb R^{(n+1)*}$, so for the given $G$--type, this is
 just an inner product $\langle\ ,\ \rangle$ of signature $(p,q)$ on
 $\Bbb R^{n+1}$. It is easy to see (compare with Section 3.3 of
 \cite{CGH-projective}) that the corresponding normal solution is the
 projective polynomial on $S^n$ induced by the homogeneous polynomial
 $\langle x,x\rangle$ of degree two. In particular, the zero set of
 this polynomial is a smooth embedded hypersurface and coincides with
 $(S^n)_0$.  Via Theorems \ref{2.5} and \ref{3.1} this carries over to the
 curved case, which proves (1) and (2).
  
  Hence we turn to the induced Cartan geometries on the curved orbits.
  According to Theorem \ref{2.5} they have type $(H,H\cap
  P)$, where $H$ is the stabiliser of some element in the orbit in
  question. Let us start with the case $h_+\in\Cal O_+$,
  i.e.~$h_+(X,X)>0$. Then of course $H=G_{h_+}$ is isomorphic to
  $SO(p,q)$ and $H\cap P$ is the stabiliser of a positive ray. But
  elements of $H$ preserve norms, so any element of $H\cap P$ has to
  preserve any vector in the positive ray. Hence $H\cap P$\ is the
  isotropy group $H_X\cong\SO(p-1,q)$ of a unit vector. A Cartan
  geometry of type $(\SO(p,q),\SO(p-1,q))$ is well known to be
  equivalent to a pseudo--Riemannian metric of signature $(p-1,q)$
  together with a metric connection, see Sections 1.1.1 and 1.1.2 of
  \cite{cap-slovak-par}.  Since the canonical Cartan connection
  associated to a projective structure is always torsion free, part
  (\ref{thm-curvedorbits-curv}) of Theorem \ref{2.5} implies that the
  induced Cartan geometries are torsion free. Hence in each case the
  corresponding metric connection in question is torsion free, and
  hence is the Levi-Civita connection. Since the induced Cartan
  geometries are simply obtained by restricting the projective Cartan
  connection, it follows that this Levi-Civita connection lies in the
  projective class. The description of $\Cal O_-$ is completely
  parallel, so the proof of (3) is complete.
  
  (4): Here we again have $H\cong SO(p,q)$, but $H\cap P\subset H$ now
  is the stabiliser of an isotropic ray in the standard
  representation. This is a parabolic subgroup $\underline{P}$ of
  $\SO(p,q)$ and Cartan geometries of this type correspond to
  pseudo--Riemannian conformal structures of signature $(p-1,q-1)$, see
  also Section \ref{Fefferman} below.
\end{proof}

\subsection{Consequences of normality}\label{consequ-norm}
Most of the analysis of a parallel metric on the projective standard
tractor bundle in \ref{sec-exmp-proj} is valid for an arbitrary Cartan
geometry of type $(SL(n+1),P)$. Only in the last part we used that
torsion freeness of the Cartan geometry implies that the induced
Cartan geometries on the open orbits produce Levi-Civita connections
of the induced metric, and not just any metric connection. In the next
step, we will use the fact that we are dealing with the normal Cartan
geometries associated to the underlying projective structure, so an
additional normalisation condition on the Cartan curvature
$K\in\Om^2(\G,\sl(n+1))$, respectively on the corresponding curvature
function $\ka:\G\goesto\La^2(\g/\p)^*\t\g$ is available.

This normalisation condition on the one hand requires $\om$ to be
torsion free, i.e., $\ka$ to have values in $\La^2(\g/\p)^*\t\p$.
Then for $X_1,X_2\in\g/\p$\ and $Y\in\p\subset \sl(n+1)$\ we have that
$[\ka(u)(X_1,X_2),Y]\in\p$, and therefore $\ka(u)(X_1,X_2)$\ factors
to a linear map $\ka_0(u)(X_1,X_2):\g/\p\to\g/\p$.  Via the
identification $\g/\p\cong \rr^n$ we can view $\ka_0(u)$ as an element
of $\La^2{\rr^{n}}^*\t L(\rr^n,\rr^n)$. Now the second part of the
normalisation condition on $\om$ implies that $\ka_0$ is completely
trace-free. This says that $\ka_0(u)\in\La^2{\rr^{n}}^*\t \sl(\rr^n)$
and also the Ricci-type contraction of $\ka_0(u)$ vanishes, i.e.,
\begin{align*}
  \mr{tr}(W\mapsto \ka_0(u)(W,Y)Z)=0
\end{align*}
for all $Y,Z\in\rr^n$. Now we can analyse the consequences for the
induced Cartan geometries. 

\begin{prop*}\label{projective-metric}
  (1) If the open orbit $M_+$ (respectively $M_-$) is non--empty, then
  the induced metric $g_+$ (respectively $g_-$) is Einstein with
  positive (respectively negative) Einstein constant, i.e.~$\Ric(g_+)$
  is a positive multiple of $g_+$ while $\Ric(g_-)$ is a negative
  multiple of $g_-$.
  
  (2) If the closed curved orbit $M_0$ is non--empty, then the induced
  Cartan geometry of type $(\SO(p,q),\underline{P})$ is normal.
\end{prop*}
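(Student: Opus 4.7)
The strategy is to apply Theorem \ref{2.5}(iii), giving $\ka_i=\mb{j}_\al^*\ka$, together with the projective normality conditions collected in \ref{consequ-norm} (torsion-freeness and trace-freeness of $\ka_0$), to derive the structure of the induced Cartan geometries on each curved orbit.

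For part (1) I focus on $M_+$; the case $M_-$ is symmetric. The induced Cartan geometry of type $(\SO(p,q),\SO(p-1,q))$ is reductive and comes from the symmetric pair $\g_\al=\p_\al\oplus\m$ with $[\m,\m]\subset\p_\al$. The key observation is that the Cartan curvature $\ka_i$ does \emph{not} equal the Riemann curvature $R^\nabla$ of the Levi--Civita connection of $g_+$: because the pair is symmetric the two differ by the flat-model term,
\[
R^\nabla(X,Y)Z=\ka_i(u)(X,Y)\cdot Z-\tfrac12[X,Y]\cdot Z,\qquad X,Y,Z\in\m.
\]
A dimension count gives $\g_\al+\p=\g$, yielding a $P_\al$-equivariant isomorphism $\g_\al/\p_\al\cong\g/\p\cong\rr^n$ under which the $\p_\al$-action on $\g_\al/\p_\al$ coincides with the restriction of the $\g_0=\gl(n)$-action on $\g/\p$. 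Under this identification, $\tr_X(\ka_i(u)(X,Y)Z)$ matches the projective Ricci trace $\tr(W\mapsto\ka_0(u)(W,Y)Z)$ and so vanishes by normality. The remaining piece $-\tfrac12\tr_X([X,Y]Z)$ is the Ricci tensor of the symmetric-space model, and an explicit bracket computation in $\so(p,q)$ yields $\tfrac{n-1}{2}\,\ep_\pm\,g_\pm(Y,Z)$ with $\ep_\pm$ equal to the sign of the restriction of $h_\pm$ to the preferred ray; hence $\mr{Ric}(g_\pm)=\tfrac{n-1}{2}\,\ep_\pm\,g_\pm$, which is Einstein of the stated sign.

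For part (2) the induced Cartan geometry on $M_0$ is parabolic of type $(\SO(p,q),\underline P)$ and corresponds to a conformal structure of signature $(p-1,q-1)$; here $\dim(\g_\al/\p_\al)=n-1=\dim M_0$. Normality means $\partial^*_{\mr{conf}}\ka_i=0$ for Kostant's codifferential on $\so(p,q)$, and the plan is to deduce this from $\partial^*_{\mr{proj}}\ka=0$ via $\ka_i=\mb{j}_\al^*\ka$. The two parabolic gradings do not align naively under $\so(p,q)\hookrightarrow\sl(n+1)$ — for instance a conformal $\underline{\g}_{-1}$-element has components in both projective $\g_{-1}$ and projective $\g_0$ — but both codifferentials are described by the uniform $|1|$-graded Kostant formula $\partial^*(Z_1\wedge Z_2\t A)=Z_1\t[Z_2,A]-Z_2\t[Z_1,A]$, and the inclusion $\so(p,q)\hookrightarrow\sl(n+1)$ is a Lie algebra homomorphism. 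Fixing a basis adapted to a null vector $v\in\rr^{n+1}$ with a dual null vector $v'$, one expresses the conformal generators of $\underline{\g}_{\pm1}$ as explicit combinations of projective generators, expands $\partial^*_{\mr{conf}}\ka_i(u)$, and uses the full set of trace identities encoded in $\partial^*_{\mr{proj}}\ka(u)=0$ together with torsion-freeness to conclude the required vanishing.

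The main obstacle is the matrix-level verification in part (2): checking that the projective codifferential condition implies the conformal one under the non-graded embedding $\so(p,q)\hookrightarrow\sl(n+1)$ requires careful book-keeping of the graded components at a null point. Part (1) is conceptually clean once the flat-model contribution is isolated, the residual trace is identified with the projective one, and the symmetric-pair Einstein constant of the pseudo-sphere model is computed — the sign then follows directly from the signature of $h_\pm$ along the preferred ray.
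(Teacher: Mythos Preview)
Your approach to part~(1) is essentially the paper's: both isolate the flat-model bracket contribution and use that projective normality kills the Ricci-type contraction of the Cartan curvature, leaving only the model term. One numerical slip: the correct relation is $R^\nabla(X,Y)=\ka_i(u)(X,Y)-[X,Y]$ with no factor $\tfrac12$, since $\tfrac12[\om,\om](\xi,\eta)=[\om(\xi),\om(\eta)]$ already absorbs it; the Einstein constant comes out as $(n-1)\ep_\pm$ rather than $\tfrac{n-1}{2}\ep_\pm$. This does not affect the sign, which is all the statement asserts.

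For part~(2) you outline a viable strategy via abstract Kostant codifferentials but, as you concede, do not carry it out. The paper's route is more direct and sidesteps the bookkeeping you anticipate. Working in a basis adapted to the isotropic line (first vector spanning $\Cal T^1$, last a dual null vector, middle block orthonormal), the two constraints $\ka(u)\in\p$ (torsion-freeness) and $\ka(u)\in\so(p,q)$ (skewness with respect to $\mb h$) together force the curvature into the block shape
\[
\begin{pmatrix} 0 & Z & 0\\ 0 & A & -\Bbb I Z^t\\ 0 & 0 & 0\end{pmatrix},
\]
with $A\in\so(p-1,q-1)$. Because of the zeros, the projective Ricci-type contraction over the lower-right $n\times n$ block reduces to the Ricci-type contraction of $A$ over the middle $(n-1)$-block, and that is exactly the conformal normality condition. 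No comparison of codifferentials through a non-graded embedding is needed; the compatibility drops out of the block structure. Your route would reach the same conclusion after the computation you flag as the main obstacle, but the paper's shortcut is worth knowing.
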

\begin{proof}
  (1) We consider the case of $M_+$ and indicate the necessary changes
  for $M_-$ in the end. Throughout the proof, we work in a point
  $u\in\G$ which is contained in the reduced Cartan subbundle. Since
  we are dealing with an open orbit we get $\h/(\h\cap\p)\cong\g/\p$,
  so part (iii) of Theorem \ref{2.5} shows that the value $\ka(u)$ of
  the curvature function $\ka$ of $\om$ coincides with the value of
  the curvature function of the induced Cartan connection. This also
  shows that torsion freeness of $\om$ implies that $\ka(u)\in
  \La^2{\rr^{n}}^*\t\h$.
  
  Since $\Cal T$ is associated to a principal $SL(n+1,\Bbb
  R)$--bundle, it has a distinguished volume form, and rescaling the
  tractor metric by a constant, we may assume that orthonormal bases
  have unit volume. We can thus work in matrix representations with
  respect to orthonormal bases. Then $\frak{so}(p,q)$ has the form
$$
\left\{\begin{pmatrix}0 & -Y^t\Bbb I_{p-1,q} \\ Y & A\end{pmatrix}:
  Y\in\rr^n, A\in \frak{so}(p-1,q)\right\},
$$
where $\Bbb I_{p-1,q}$ is diagonal with $p-1$ entries equal to 1
and $q$ entries equal to $-1$. Since the subspaces $\rr^n$ and
$\frak{so}(p-1,q)$ in $\h$ are invariant under $SO(p-1,q)$, the
components of the Cartan connection in the two subspaces are
individually equivariant. The $\rr^n$--component $\th$ defines a
soldering form on the bundle which is used to carry over the inner
product $\langle\ ,\ \rangle$ on $\Bbb R^n$ corresponding to $\Bbb
I_{p-1,q}$ to the metric $g_+$ on the tangent spaces of $M_+$. The
$\frak{so}(p-1,q)$--component $\ga$ is a principal connection, which
induces a metric connection on the tangent bundle, and by torsion
freeness, this is the Levi--Civita connection of $g_+$.

From the definition of the curvature of a Cartan connection it follows
that the curvature $K_+$ of the induced Cartan connection is given by
\begin{equation}\label{KRdiff}
    K_+(u)(\xi,\eta)={R}(u)(\xi,\eta)+[\th(\xi),\th(\eta)],
\end{equation}
where $R$ is the curvature of $\ga$, and hence the Riemann curvature,
and the last bracket is in $\frak{so}(p,q)$. Now one immediately
computes that for $Y_1,Y_2,Z\in\rr^n\subset\frak h$, we get
\begin{equation}
  \label{bracket}
  [[Y_1,Y_2],Z]=\langle Y_1,Z\rangle Y_2-\langle Y_2,Z\rangle Y_1
\end{equation}
Using this, one easily calculates that the Ricci type contraction
of $Y_1,Y_2\mapsto [Y_1,Y_2]$ is given by $-(n-1)$ times the inner
product $\langle\ ,\ \rangle$. Since the left hand side of
\eqref{KRdiff} has vanishing Ricci type contraction by normality, we
conclude that the Ricci type contraction $\Ric(g_+)$ of $R$ equals
$(n-1)g_+$, so $g_+$ is positive Einstein.  

In the case of $M_-$, the first basis vector used to define the matrix
representation must be chosen to be negative. But then the entries of
a matrix in $\rr^n\subset\frak{so}(p,q)$ must be $Y$ and $Y^t\Bbb
I_{p,q-1}$. This causes a sign change in formula \eqref{bracket} and
hence in the Ricci--type contraction, so on obtains
$\Ric(g_-)=-(n-1)g_-$.

\medskip

(2) Again we work in a point $u\in\G$ which lies in the reduced Cartan
bundle over the curved orbit, which means that we work in a basis for
$\Cal T_x$, which is adapted to the tractor metric. We choose this
basis in such a way that the first basis vector $X$ spans the
distinguished line (which is isotropic for the tractor metric in this
point), the last basis vector is isotropic and pairs to one with $X$
under the tractor metric. Then we choose an orthonormal basis for the
orthocomplement of the plane spanned by these two vectors to complete
our basis. The normalisation condition on the projective Cartan
curvature implies torsion freeness and that its $\frak g_0$--component
has values in $\frak{sl}(n)$. Moreover, it has to be skew symmetric
with respect to the tractor metric, so altogether it must be of the
form
\begin{equation}
  \label{m0curv}
\ka(\xi,\eta)=\begin{pmatrix} 0 & Z(\xi,\eta) & 0 \\ 0 & A(\xi,\eta) &
  -\Bbb I(Z(\xi,\eta))^t \\ 0 & 0 & 0\end{pmatrix}.  
\end{equation}
Here the blocks are of size $1$, $n-1$, and $1$ and $\Bbb I=\Bbb
I_{p-1,q-1}$. Finally the normalisation condition tells us, that the
Ricci type contraction over the lower right $n\x n$--block has to
vanish. This coincides with the Ricci type contraction of $A$ taken
over $X^\perp/\Bbb RX$.

Now we know that restricted to the tangent space of the reduced Cartan
bundle, the projective Cartan connection restricts to the reduced
Cartan connection. In particular, its soldering form must have values
in $X^\perp/\Bbb RX$, so this corresponds to
$\h/(\h\cap\p)\subset\g/\p$ and thus represents the tangent spaces to
$M_0$. Moreover, by part (iii) of Theorem \ref{2.5}, \eqref{m0curv}
coincides with the curvature of the reduced Cartan connection. Now the
normalisation condition on a conformal Cartan connection is torsion
freeness plus vanishing of the Ricci--type contraction of the $\frak
g_0$--component of the curvature function. Since the latter component
is represented by $A(\xi,\eta)$, normality of the induced Cartan
geometry follows.
\end{proof}

\subsection{Hermitian metrics on complex projective standard tractors}
\label{complex-projective}
There is an almost complex version of metrics on the projective
standard tractor bundle. The geometry behind this is that of an almost
complex projective structure, which is a generalisation of the notion
of $h$--projective structure, see  
e.g.\cite{matveev,Apostolov-Calderbank-Gauduchon} and references therein. This
case is significantly more complicated than the real version in
several respects. Therefore, we will only derive some basic facts
here, and study it in more detail elsewhere.

Almost complex classical projective structures can be equivalently
described as parabolic geometries of type $(G,P)$, where
$G=SL(n+1,\Bbb C)$ and $P\subset G$ is the stabiliser of a complex line in the
standard representation $\Bbb C^{n+1}$ of $G$. However, $G$ and $P$
are viewed as real Lie groups and likewise one has to consider their
Lie algebras as real Lie algebras. Doing this, one obtains a geometry
which is much more general than just the obvious holomorphic analog of
a classical projective structure. As far as we know, the parabolic
geometry approach to this structure is not developed in detail in the
literature, a brief account can be found in Section 4.6 of
\cite{cap-correspondence}.

Explicitly, one has to consider manifolds $M$ of real dimension $2n$
endowed with an almost complex structure $J:TM\to TM$. Then there is a
complex version of the projective equivalence of linear connections on
$TM$. Consider a $(1,0)$--form $\Ups$, i.e.~$\Ups(x)$ is a complex
linear map $T_xM\to\Bbb C$ (with respect to $J_x$) for each $x$. Then,
defining the action of complex numbers on tangent vectors via $J$, one
defines complex projective equivalence by 
$$
\tilde D_\xi\eta=D_\xi\eta+\Ups(\xi)\eta+\Ups(\eta)\xi, 
$$ for all vector fields $\xi$, $\eta$. Note that this does not imply
projective equivalence in the real sense, since complex linear
combinations of $\xi$ and $\eta$ occur on the right hand side. The use
of complex linear combinations leads to the fact that $DJ=0$ implies
that $\tilde DJ=0$ for any equivalent connection. 

Connections which are equivalent in this sense have the same
torsion. However, a connection preserving an almost complex structure
cannot be assumed to be torsion free, since it is well known that for
such a connection, the $(0,2)$--component of the torsion is (up to
a nonzero factor) given by the Nijenhuis tensor of $J$. On the other
hand, given an almost complex structure one can always find a
connection $D$ compatible with $J$ for which the torsion is of type
$(0,2)$.

Hence an almost complex projective structure is defined to  be an
almost complex manifold $(M,J)$ together with an equivalence class
$[D]$ of connections such that $DJ=0$ and the torsion of $D$ is of
type $(0,2)$. 

\medskip

Now the holonomy reduction we want to consider in this case is related
to the homogeneous space $\Cal O$ of $G=SL(n+1,\Bbb C)$ which consists
of all Hermitian inner products on $\Bbb C^{n+1}$ which are
non--degenerate with some fixed signature $(p+1,q+1)$, where
$p+q=n-1$. Starting from an almost complex projective structure, one
forms the corresponding normal parabolic geometry of type $(G,P)$.
Forming the associated bundle to the Cartan bundle with respect to
(the restriction to $P$ of) the standard representation $\Bbb C^{n+1}$
of $G$, one obtains the standard tractor bundle $\Cal T$ of the almost
complex projective structure. By construction, this is a complex
vector bundle of complex rank $n+1$ and the complex line, in the
standard representation, stabilised by $P$ gives rise to a complex
line subbundle $\Cal E\subset\Cal T$. A holonomy reduction of type
$\Cal O$ is equivalent to a Hermitian bundle metric $\mb{h}$ on $\Cal
T$ which is non--degenerate of signature $(p+1,q+1)$ and parallel for
the canonical connection.

\begin{thm*}
  Let $(M,J,[D])$ be an almost complex projective structure and
  suppose that we have fixed a holonomy reduction of type $\Cal O$, as given
   by a parallel Hermitian metric $\mb{h}$ of signature
  $(p+1,q+1)$ on the standard tractor bundle $\Cal T$. Then we have:
  
  (1) The metric $\mb{h}$ determines a normal solution $\si$ of the
  first BGG operator acting on sections of the real line bundle $\Cal
  H_0$ of Hermitian metrics on the (complex) density bundle $\Cal
  E\subset\Cal T$. This BGG operator is of second order and has values
  in the space of symmetric, anti--Hermitian bilinear maps $TM\x
  TM\to\Cal H_0$. 
  
  (2) The curved orbit decomposition has the form $M=M_-\cup M_0\cup
  M_+$, where the first and last curved orbits are open and given by
  those points where the metric defined by $\si$ is negative definite,
  respectively positive definite. If non--empty, the curved orbit
  $M_0$ is an embedded hypersurface which coincides with $\Cal
  Z(\si)$.
  
  (3) On $M_\pm$ one obtains induced Hermitian metrics of signature
  $(p,q+1)$, respectively $(p+1,q)$, together with metric connections.
  If the initial structure is torsion free, then the metric
  connections are the Levi--Civita connections, so one actually
  obtains K\"ahler structures on $M_{\pm}$.

  (4) If $M_0\neq\emptyset$, then it inherits a Cartan geometry of
  type $(SU(p+1,q+1),\underline{P})$, where $\underline{P}$ is the
  stabiliser of an isotropic line. If the initial structure is torsion
  free, then this induces an (integrable) CR structure of signature
  $(p,q)$ on $M_0$.
\end{thm*}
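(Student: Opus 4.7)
The plan is to follow closely the strategy of Theorem \ref{sec-exmp-proj} for the real projective case, now applied to the complex/Hermitian data. First I decompose the $G$-orbit $\mc{O}$ into $P$-orbits by restricting each Hermitian form to the distinguished complex line $\ell \subset \cc^{n+1}$ stabilised by $P$. Since $\ell$ is one complex-dimensional, this restriction is positive definite, negative definite, or zero, producing three $P$-orbits $\mc{O}_+$, $\mc{O}_-$, $\mc{O}_0$ in $\mc{O}$ and hence the decomposition $M = M_+ \cup M_- \cup M_0$ asserted in (2). Claim (1) is then immediate from Theorem \ref{normp}: the parallel section of the real bundle of Hermitian forms on $\Cal T$ corresponding to $\mb{h}$ projects via $\Pi$ to a section $\si$ of the line bundle of Hermitian metrics on $\ce \subset \Cal T$, and $\si$ is a normal solution of the first BGG operator, with $\mc{Z}(\si) = M_0$ by construction.

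To see that $M_0$ is an embedded hypersurface when non-empty, I apply the Proposition of Section \ref{3.1}: on the homogeneous model $G/P = \cc P^n$ the normal solution attached to $\langle\ ,\ \rangle$ of signature $(p+1,q+1)$ is the projective function induced by the real bidegree $(1,1)$ polynomial $z \mapsto \langle z, z\rangle$, whose zero set is a smooth real hypersurface in $\cc P^n$ by non-degeneracy of $\langle\ ,\ \rangle$. The diffeomorphism $\ph$ provided by that Proposition transports this hypersurface structure to a neighbourhood of any point of $M_0$.

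For (3) and (4), I fix a representative $h \in \mc{O}$ for each $P$-orbit and apply part (ii) of Theorem \ref{2.5} to identify $(H_i, P_i) = (G_h, G_h \cap P)$. For $h \in \mc{O}_\pm$ one has $G_h \cong \SU(p+1,q+1)$ as a real Lie group, while $G_h \cap P$ is the stabiliser in $\SU(p+1,q+1)$ of a positive (resp.\ negative) complex line, namely $S(\U(1) \times \U(p,q+1))$, resp.\ $S(\U(1) \times \U(p+1,q))$. A Cartan geometry of this type on a $2n$-manifold is equivalent to a pseudo-Hermitian metric of the indicated signature together with a metric, $J$-preserving connection, as a dimension count confirms. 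For $h \in \mc{O}_0$ the line is isotropic and $G_h \cap P$ is the parabolic subgroup $\underline{P}$ of $\SU(p+1,q+1)$ stabilising an isotropic complex line; Cartan geometries of type $(\SU(p+1,q+1), \underline{P})$ are the standard description of non-degenerate almost CR structures of signature $(p,q)$, cf.\ \cite{cap-slovak-par}.

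For the torsion-free refinements, part (iii) of Theorem \ref{2.5} gives $K_i = \mb{j}_\al^* K$, so torsion-freeness of the ambient Cartan connection passes to the induced ones. On $M_\pm$ a torsion-free, $J$- and metric-preserving connection is the Levi-Civita connection, and parallel $J$ then gives the K\"ahler condition. The main obstacle is (4) for $M_0$: one must show that torsion-freeness of the restricted Cartan connection forces genuine integrability, not merely partial integrability, of the induced almost CR structure. This requires an analog of Proposition \ref{consequ-norm}(2), carefully tracking how the curvature function of the ambient almost complex projective Cartan connection decomposes along the graded subalgebra $\su(p+1,q+1) \subset \sl(n+1,\cc)$ and identifying the piece whose vanishing encodes CR-integrability. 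Extra care is needed here because the ambient torsion-free hypothesis in the almost complex projective sense corresponds only to the vanishing of the $(0,2)$-part of the underlying torsion.
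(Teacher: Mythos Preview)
Your approach is essentially the same as the paper's: decompose $\mathcal{O}$ into three $P$--orbits by the sign of the restriction to the distinguished line, read off the stabiliser pairs $(H,H\cap P)$ for each orbit, invoke Theorem~\ref{2.5} for the induced Cartan geometries, and use part~(iii) of that theorem to propagate torsion--freeness. The identifications of the groups and the description of $M_0$ as an embedded hypersurface via comparison with the quadric in $\Bbb CP^n$ all match the paper.

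The one place you diverge is the final paragraph, where you flag the torsion--free part of~(4) as ``the main obstacle'' and propose an analog of Proposition~\ref{consequ-norm}(2). This is an overcomplication. Proposition~\ref{consequ-norm}(2) establishes \emph{normality} of the induced Cartan connection, which is not what is being claimed in~(4); the theorem only asserts that the induced CR structure is integrable. For that, torsion--freeness of the induced Cartan connection suffices, and the paper simply cites the well--known fact that torsion--free Cartan geometries of type $(SU(p+1,q+1),\underline{P})$ are equivalent to integrable non--degenerate CR structures of signature $(p,q)$. No curvature decomposition along $\su(p+1,q+1)\subset\sl(n+1,\Bbb C)$ is needed.

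Your closing worry that ``the ambient torsion--free hypothesis in the almost complex projective sense corresponds only to the vanishing of the $(0,2)$--part of the underlying torsion'' conflates two notions of torsion. What Theorem~\ref{2.5}(iii) transports is torsion--freeness of the \emph{Cartan connection}, meaning the curvature form $K$ takes values in $\frak p$; this is a clean condition independent of any underlying linear connection, and it passes verbatim to $K_i=\mb{j}_\al^*K$, which then has values in $\frak h\cap\frak p=\underline{\frak p}$. That is all that is required.
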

\begin{proof}
The tractor bundle $V$ in question is induced by the space of all
Hermitian bilinear forms on $\Bbb C^{n+1}$. The subspace of those
forms, which vanish on the complex line stabilised by $P$ is evidently
$P$--invariant and has codimension one. Thus it must be the maximal
$P$--invariant subspace and the quotient by this subspace can clearly
be identified with the space of Hermitian bilinear maps on the
distinguished line, which gives the description of $\Cal H_0$ in
(1). The cohomology space inducing $\Cal H_1$, i.e. the target space
for the given first BGG operator, can be either computed directly or
using representation theory methods, and having this part (1) follows
from the general theory discussed in \ref{3.1}.
  
  The stabiliser of any Hermitian inner product from $\Cal O$ is a
  subgroup of $G$ conjugate to $SU(p+1,q+1)\subset SL(n+1,\Bbb C)$.
  The homogeneous model $G/P$ of the geometry is simply the complex
  projective space $\Bbb CP^n$, so according to \ref{P-types} we can
  determine $P$--types by looking at $SU(p+1,q+1)$--orbits on the
  space of complex lines in $\Bbb C^{n+1}$. These orbits are
  determined by the signature of the restriction of the Hermitian
  inner product to a line, so this looks exactly as in the real case,
  and we have $\Cal O=\Cal O_+\cup\Cal O_0\cup\Cal O_-$. This also
  gives us the basic form of the curved orbit decomposition in (2).
  Identifying the stabiliser $H$ of the inner product with
  $SU(p+1,q+1)$ the subgroups $H\cap P$ in the three cases are
  conjugate to $S(U(1)\x U(p,q+1))\cong U(p,q+1)$, a parabolic
  subgroup $\underline{P}\subset SU(p+1,q+1)$, and $S(U(p+1,q)\x
  U(1))\cong U(p+1,q)$, respectively.

  For the $P$--type defined by $\Cal O_+$, the standard way to present
  $Q:=S(U(1)\x U(p,q+1))\subset SU(p+1,q+1)=:H$ is as matrices of the
  form $\begin{pmatrix} \det(A)^{-1} & 0 \\ 0 & A\end{pmatrix}$ with
  $A\in U(p,q+1)$. On the level of Lie algebras, we have
$$
\frak h=\left\{\begin{pmatrix} -\tr(B) & -Z^*\Bbb I_{p,q+1} \\ Z &
    B\end{pmatrix}: Z\in\Bbb C^n, B\in\frak u(p,q+1)\right\},
$$ where $\Bbb I_{p,q+1}$ is the diagonal matrix of size $p+q+1$ with
    first $p$ entries equal to $1$ and last $q+1$ entries equal to
    $-1$.  The Lie algebra $\frak q$ of $Q\subset H$ corresponds to
    the block diagonal part. Hence $\frak h/\frak q$ can be identified
    with $\Bbb C^n$ with the representation of $Q$ on this space given
    by $A\cdot Z=\det(A)AZ$. In particular, the obvious complex
    structure on $\frak h/\frak q$ as well as the standard inner
    product of signature $(p,q+1)$ on this space are invariant under
    the action of $Q$.  Consequently, a Cartan geometry of type
    $(H,Q)$ on a smooth manifold $M$ gives rise to an almost complex
    structure $J$ and a Hermitian (with respect to $J$) metric $\hhh$
    of signature $(p,q+1)$.  Finally, the Cartan geometry also gives
    rise to a principal connection, which can be equivalently encoded
    as a linear connection $\nabla$ on the tangent bundle which is
    compatible both with $J$ and with $\hhh$.  Together, $J$, $\hhh$,
    and $\nabla$ completely determine the Cartan geometry. For the
    orbit $\Cal O_-$, the situation is completely parallel, with the
    only difference that $\hhh$ has signature $(p+1,q)$ rather than
    $(p,q+1)$. This completes the proof of the first part of (3).

For $\Cal O_0$, we get the stabiliser of an isotropic line as the
subgroup in $H$, and it is well known that Cartan geometries of the
corresponding type are related to partially integrable almost CR
structures of signature $(p,q)$, see also \ref{CR-almost-Einstein}
below. This also implies the first part of (4).

On the homogeneous model $\Bbb CP^n$, a parallel metric on the
standard tractor bundle corresponds to a fixed Hermitian inner product
$\underline{\mb{h}}$ of signature $(p+1,q+1)$ on $\Bbb C^{n+1}$.  The
orbit decomposition is just given by the signature of the restriction
of $\underline{\mb{h}}$ to the complex line determined by a point in
$\Bbb CP^n$ as described above. It is well known that the spaces of
positive, respectively negative, lines are open and they are the complex
hyperbolic spaces of signature $(p,q+1)$ and of signature$(p+1,q)$,
respectively. The space of isotropic lines is a quadric and, in
particular, a smooth embedded hypersurface. It is the homogeneous
model of (partially integrable almost) CR structures of hypersurface
type, which are non--degenerate of signature $(p,q)$. Since $\Cal O_0$
is exactly the zero set of the normal solution $\underline{\si}$
determined by $\underline{\mb{h}}$, we obtain (2).

To complete the proof, let us assume that the initial Cartan geometry
$(p:\Cal G\to M,\om)$ is torsion free (which is equivalent to the fact
that we deal with a holomorphic projective structure, see
\cite{cap-correspondence}). Then by part (iii) of Theorem \ref{2.5} also the
induced Cartan geometries are torsion free. It is well known that
torsion free Cartan geometries of type $(SU(p+1,q+1),\underline{P})$
are equivalent to (integrable) non--degenerate CR structures of
signature $(p,q)$, so the proof of (4) is complete.

For the curved orbits $M_\pm$ torsion freeness of the induced Cartan
geometries implies that the induced metric connections are torsion
free as in the real case. Since these connections preserve the
Hermitian metrics, these metrics actually must be K\"ahler.
\end{proof}

Analysing the consequences of normality is similar to the real case,
but significantly more complicated, in particular if one drops the
assumption of torsion freeness. Thus we just indicate some basic facts
in the torsion--free case here, and pick up the detailed discussion
elsewhere. First one has to analyse the relation between the Cartan
curvatures of the initial Cartan connection of type $(G,P)$ and the
induced Cartan connections.  For the open orbits, this is similar to
the discussion in the proof of Proposition \ref{projective-metric},
and one verifies that K\"ahler--Einstein metrics are induced on
$M_\pm$. Moreover, parallel to \cite[Theorem 3.3]{CGH-projective} one
shows that these metrics are complete if one starts from a complex
projective structure on a compact manifold. Hence in the torsion free
case, one obtains a compactification of a complete K\"ahler--Einstein
manifold by adding a CR structure at infinity.

If one does not assume the original structure to be torsion free, the
induced connection on $M_\pm$ will differ (in a controlled way) from
the Levi-Civita connection, and also the normalisation condition for
the Cartan connection of type $(G,P)$ becomes significantly more
involved. To describe the induced geometry on $M_0$, one first has to
check when the induced Cartan connection of type
$(SU(p+1,q+1),\underline{P})$ is regular, since then it induces a
partially integrable almost CR structure on $M_0$.

\subsection{Fefferman-type constructions}\label{Fefferman}
We next outline examples of holonomy reductions for conformal
structures. Let $M$ be a smooth manifold of dimension $n\geq 3$. Then
a conformal structure of signature $(p,q)$ on $M$ is given by an
equivalence class $[g]$ of pseudo--Riemannian metrics of signature
$(p,q)$ on $M$. Here two metrics $g$ and $\hat g$ are considered
equivalent if there is a positive smooth function $f:M\to\Bbb R$ such
that $\hat g=fg$. It is a classical result going back to E.~Cartan
that an oriented conformal structure can be equivalently described as
a parabolic geometry of type $(G,P)$, where $G=SO(p+1,q+1)$ and
$P\subset G$ is the stabiliser of an isotropic ray in the standard
representation $\Bbb R^{p+1,q+1}$ of $G$. The corresponding grading on
the Lie algebra $\frak g$ of $G$ has the form $\frak g=\frak
g_{-1}\oplus\frak g_0\oplus\frak g_1$, with $\frak g_0=\frak{co}(p,q)$
(the Lie algebra of the conformal group of signature $(p,q)$), $\frak
g_{-1}\cong\Bbb R^{p,q}$ and $\frak g_1\cong (\Bbb R^{p,q})^*$ as
representations of $\frak g_0$.

We will need the \textit{standard tractor bundle}, which is associated
to the (restriction to $P$ of the) standard representation $\Bbb
R^{p+1,q+1}$ of $G$. It inherits a canonical linear connection
from the conformal Cartan connection. By construction, it carries a
natural bundle metric $h$ of signature $(p+1,q+1)$, the tractor
metric, and the isotropic line stabilised by $P$ gives rise to a line
subbundle $\Cal T^1\subset\Cal T$, whose fibres are isotropic with
respect to $h$. The orthogonal spaces to these preferred isotropic
lines fit together to form a subbundle $\Cal T^0\subset\Cal T$ of
corank one, so we obtain a filtration $\Cal T^1\subset \Cal
T^0\subset\Cal T$ of the tractor bundle.

The first group of examples is related to generalised Fefferman
constructions. In these cases, the curved orbit decomposition is
trivial, since there is only one $P$--type. This does not mean that
our results are vacuous in these cases, however. On the one hand, we
may conclude from just looking at the homogeneous model that also in
curved cases the solutions obtained from such a restriction cannot
have any zeroes. On the other, we immediately see that in this case we
get a reduced Cartan geometry over the whole manifold. Since these
cases are quite well studied in the literature, we only discuss them
very briefly. 

The most classical example of this situation comes up when both $p$
and $q$ are odd, say $p=2p'+1$ and $q=2q'+1$. Then we have the
subgroup $U(p'+1,q'+1)\subset SO(2p'+2,2q'+2)$, which looks like a
good candidate for a holonomy group. In the language of Section 2, we
have to look at the homogeneous space $\Cal O$ of $G$ which consists
of all complex structures $J$ on $\Bbb R^{(2p'+2,2q'+2)}$, which are
orthogonal (or equivalently skew--symmetric) with respect to the
standard inner product. It is clear then that such a holonomy reduction
in the curved case is given by the choice of a complex structure $\Cal
J$ on the bundle $\Cal T$, which is skew symmetric with respect to the
tractor metric $h$ and parallel with respect to (the connection
induced by) the standard tractor connection.

The best way to view $\Cal J$ is as a parallel section of the
\textit{adjoint tractor bundle} $\frak{so}(\Cal TM)=:\Cal AM$. This
bundle is induced by (the restriction to $P$ of) the adjoint
representation $\frak{so}(2p'+2,2q'+2)$ of $G$. The natural quotient
$\Cal H_0$ in this case is the tangent bundle $TM$, and the
corresponding first BGG operator is the conformal Killing operator,
whose kernel consists of all infinitesimal conformal isometries. It
turns out (see e.g.\ \cite{cap-infinitaut}) that normal solutions are
conformal Killing fields which insert trivially into the Weyl curvature
and into the Cotton--York tensor.

The homogeneous model $G/P$ of conformal geometry is the space of
isotropic rays in $\Bbb R^{2p'+2,2q'+2}$, which is diffeomorphic to
$S^{2p'+1}\x S^{2q'+1}$. Since the group $U(p'+1,q'+1)$ evidently acts
transitively on the space of real isotropic rays in $\Bbb
C^{p'+1,q'+1}$, we conclude from (\ref{Modeltypemap}) that indeed
there is only one $P$--type in this case. In particular, this implies
that the conformal Killing vector field underlying $\Cal J$ is nowhere
vanishing.  Hence it determines a one--dimensional foliation of $M$ from
which we can form local leaf spaces.

It then turns out that the holonomy reduces further to $SU(p'+1,q'+1)$
\cite{Leitcomm,cap-gover-cr}, and that the restricted Cartan geometry
over all of $M$ can be viewed as a Cartan geometry over a local leaf
space $N$ for the foliation mentioned above. This is of type
$(SU(p'+1,q'+1),P_{SU})$, where $P_{SU}$ is the stabiliser of an
isotropic complex line. This geometry turns out to be regular and
normal (see \cite{cap-gover-cr}), thus giving rise to CR structure of
signature $(p',q')$ on $N$. The space $M$ thereby becomes locally
conformally isometric to the Fefferman space of this CR structure. The
fact that the canonical Cartan connection associated to a CR structure
agrees with the one associated to its Fefferman space has rather
strong consequences, see \cite{cap-gover-cr-tractors}.

\medskip

There are variants of this situation in which one obtains similar
results. For example, there is an analog based of the inclusion of the
split real form of the exceptional Lie group $G_2$ into
$SO(3,4)$. This is related to generic rank two distributions on
manifolds of dimension five studied in Cartan's celebrated
``five--variables paper'' \cite{cartan-cinq}, and the canonical
conformal structure induced by such a distribution obtained in
\cite{nurowski-metric}. On the level of tractor bundles, this holonomy
reduction is related to a parallel tractor three form (i.e. a section
of $\La^3\Cal T^*$) of certain algebraic type. The underlying
geometric object is a normal conformal Killing two--form, which by our
result is nowhere vanishing. In this case no leaf spaces are involved 
 and the restricted Cartan
connection is again normal. This leads to a characterisation of such
holonomy reductions and it has further strong consequences, see
\cite{mrh-sag-rank2}.

A very similar construction applies to split signature conformal
structures in dimension $6$, where the relevant subgroup is
$Spin(8)\subset SO(4,4)$, and one has to consider the fourth exterior
power of the standard tractor bundle rather than the third one. A
holonomy reduction then gives rise to a generic distribution of rank
three, to which the initial conformal structure is canonically
associated as first shown in R.~Bryant's thesis, see
\cite{bryant-3planes,mrh-sag-twistors} for recent accounts.

\subsection{Almost Einstein scales}\label{almost-Einstein}
We next study the simplest example of a holonomy reduction for
conformal structures, namely the existence of a parallel section $s$
of the standard tractor bundle $\Cal T$ of a conformal structure
$(M,[g])$. This was actually the first case in which the zeroes of a
solution of a first BGG operator were studied using parabolic geometry
methods, see \cite{gover-aes}  and this motivated
 many of the developments that led to
this article. We treat this case here to illustrate how it fits into
the more general machinery (the latter also simplifying many aspects).

Recall from Section \ref{Fefferman} above that for a conformal
structure of signature $(p,q)$ the standard tractor bundle has rank
$p+q+2$ and is endowed with a canonical parallel metric $h$ of
signature $(p+1,q+1)$, as well as a line subbundle $\Cal
T^1\subset\Cal T$ whose fibres are isotropic with respect to
$h$. Following a standard convention in conformal geometry, we shall
write $\Cal E[1]:=(\Cal T^1)^*$.  Note that for a parallel section $s$
of $\Cal T$, the function $h(s,s)$ must be constant, and up to a
constant rescaling of $s$, the possible $G$ types in this case are
distinguished by the fact that $h(s,s)$ is positive, zero, or
negative, respectively.

\begin{thm*}
  Suppose that $(M,[g])$ is an oriented conformal pseudo--Rie\-mann\-ian
  structure of signature $(p,q)$ and that $s$ is a parallel section of
  the standard tractor bundle $\Cal T$ of $M$. Then $s$ projects onto
  a normal solution $\si\in\Ga(\Cal E[1])$ of a first BGG operator.
  
  (1) Suppose that $h(s,s)>0$ (respectively $h(s,s)<0$). Then the
  curved orbit decomposition has the form $M=M_+\cup M_0\cup M_-$,
  where $M_+$ and $M_-$ are open and $M_0$ coincides with the zero
  locus of $\si$ and (if non--empty) consists of embedded
  hypersurfaces.  Moreover, on $M_\pm$, there is an Einstein metric
  in the conformal class, whose Einstein constant is negative if
  $h(s,s)>0$ and positive if $h(s,s)<0$. The curved orbit $M_0$ is
  empty if $p=0$ (respectively $q=0$), otherwise it inherits a
  conformal structure of signature $(p-1,q)$ (respectively $(p,q-1)$).
  The induced Cartan geometry on $M_0$ is the normal Cartan geometry
  determined by this conformal structure.
  
  (2) Suppose that $h(s,s)=0$. Then the curved orbit decomposition has
  the form $M=M_1^+\cup M_1^-\cup M_2\cup M_3^+\cup M_3^-$, where
  $M_1^\pm$ are open, $M_2\cup M_3^\pm=\Cal Z(\si)$, 
$M_2$ (if non--empty)
  consists of smoothly embedded hypersurfaces and $M_3^\pm$ (if non--empty)
  consists of isolated points.  If $p=0$ or $q=0$, then $M_2$ must be
  empty, so $\si$ can only have isolated zeros in this case.
  Otherwise, if $M_3^+$ or $M_3^-$ is non--empty, then also $M_2$ has
  to be non--empty.
  
  On $M_1^\pm$ there is a Ricci--flat metric in the conformal class.
  If non--empty, the curved orbit $M_2$ locally fibres over a smooth
  manifold $N$ with one--dimensional fibres. If vectors tangent to
  these fibres insert trivially into the Weyl tensor and the
  Cotton--York tensor of the initial conformal structure, then the leaf
  space $N$ inherits a canonical conformal structure of signature
  $(p-1,q-1)$.
\end{thm*}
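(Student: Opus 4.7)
The plan is to classify the $P$--orbits on the $G$--orbit $\mc{O}=G\cdot s(u_0)\subset V:=\rr^{p+1,q+1}$ by the $h$--pairings with the $P$--stabilised null ray $V^1$, and then transport the stratification to $M$ via Theorem~\ref{2.5}. By Theorem~\ref{normp}, $\si=\Pi(s)\in\Ga(\mc{E}[1])$ is a normal solution of the first BGG operator, since the maximal $P$--invariant subspace is $V^0=e_0^\perp$ for any generator $e_0$ of $V^1$, and $V/V^0\cong\mc{E}[1]$ as a $P$--module. Writing $V\supset V^0\supset V^1$, a vector $v\in\mc{O}$ is classified by its filtration level together with the signs $\sgn\langle e_0,v\rangle$ (for $v\notin V^0$) and, when $v=\la e_0$, the sign of $\la$. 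In case (1), $h(s,s)\ne 0$ excludes $v\in V^1$ and produces three $P$--orbits $M_+,M_-,M_0$; equation~\eqref{equ-zerodecomp} with $U=V^0$ gives $M_0=\mc{Z}(\si)$. In case (2) there are five orbits: the open $M_1^\pm$ ($v\notin V^0$), the codimension-one $M_2$ ($v\in V^0\setminus V^1$ null), and the point-type $M_3^\pm$ (positive, resp.\ negative scalar multiples of $e_0$); applying \eqref{equ-zerodecomp} successively with $U=V^0$ and $U=V^1$ identifies $\mc{Z}(\si)=M_2\cup M_3^+\cup M_3^-$ and the higher-order stratification. Dimension counts $\dim M_i=\dim H_i-\dim P_i$ on the model confirm the codimensions: $M_0,M_2$ are hypersurfaces and $M_3^\pm$ consist of isolated points (the latter because $G_{\la e_0}\subset P$). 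The emptiness statements follow from the orbit picture in $V^0/V^1\cong\rr^{p,q}$: positive (resp.\ negative) norm vectors exist there only if $p\ge 1$ (resp.\ $q\ge 1$), and nonzero null vectors only if both $p,q\ge 1$. Finally, $M_3^\pm$ lies in the closure of $M_2$ on the model, so Lemma~\ref{lem-comp} gives $M_3^\pm\ne\emptyset\Rightarrow M_2\ne\emptyset$.

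For the open orbits in both cases, Theorem~\ref{2.5}(ii) furnishes a Cartan geometry of type $(H,H\cap P)$, where $H=G_v\cong\SO(p,q+1)$ or $\SO(p+1,q)$ according to the sign of $\langle v,v\rangle=h(s,s)$. Since $\langle v,e_0\rangle\ne 0$ on the open orbit, one checks that $H\cap P=\SO(p,q)$ is the joint stabiliser of $v$ and $e_0$ acting on $\{v,e_0\}^\perp$ of signature $(p,q)$; as in sections~1.1.1--1.1.2 of \cite{cap-slovak-par} this is equivalent to a pseudo-Riemannian metric together with a metric connection. Normality of the original conformal Cartan geometry, combined with Theorem~\ref{2.5}(iii), forces torsion-freeness of this connection and hence identifies it with the Levi-Civita connection. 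The Einstein condition on the metric, together with the sign of its Einstein constant, follows from a matrix computation entirely parallel to the proof of Proposition~\ref{projective-metric}(1): vanishing of the Ricci-type contraction in the conformal normality condition forces the Ricci tensor to be a positive multiple of $-h(s,s)/n$ times the metric. Thus $h(s,s)=0$ yields Ricci-flatness on $M_1^\pm$, and in every case the resulting metric is manifestly in the conformal class since the nowhere-vanishing density $\si$ is a conformal scale on the open orbit.

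For $M_0$ in case (1), the stabiliser $H=G_v\cong\SO(p,q+1)$ (or $\SO(p+1,q)$) of a non-null $v\in V^0\setminus V^1$ meets $P$ in the stabiliser of the isotropic ray $V^1\subset v^\perp$, which is a conformal parabolic in $H$; hence $M_0$ inherits a conformal Cartan geometry of signature $(p-1,q)$ (resp.\ $(p,q-1)$), and its normality follows by a curvature argument directly analogous to Proposition~\ref{projective-metric}(2). The subtlest piece is the $M_2$ stratum in case (2): here $H=G_v$ for null $v\in V^0\setminus V^1$ is itself a parabolic in $G$, and the subgroup $Q\subset H$ that additionally stabilises the totally null two-plane $W=\mathrm{span}(v,e_0)$ contains $H\cap P$ as a codimension-one subgroup. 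On the model this presents $H/(H\cap P)$ as a one-dimensional fibration over $H/Q$, and $H/Q$ is the conformal sphere of null directions in $W^\perp/W$---a space of signature $(p-1,q-1)$. Via Theorem~\ref{2.5}(ii) and Lemma~\ref{lem-comp} the fibration transports locally to $M_2$, yielding the leaf space $N$ with its conformal structure. The main technical obstacle is to verify that the Cartan geometry on $N$ produced by this double-quotient construction is the \emph{normal} conformal Cartan geometry associated to the conformal structure, which requires tracing $K$ through $H\cap P\subset Q\subset H$ and checking the Ricci-type normalisation on $N$---a more intricate cousin of the closed-orbit calculation in Proposition~\ref{projective-metric}(2).
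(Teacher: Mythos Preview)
Your proof follows essentially the same route as the paper's: classify $P$--orbits via the position of $s(x)$ relative to the preferred isotropic ray, then invoke Theorem~\ref{2.5} to transport the stratification and identify the induced Cartan geometries, with the Einstein property deduced from normality exactly as in Proposition~\ref{projective-metric}. Two small corrections are worth flagging. First, in your treatment of $M_2$ in case~(2), the stabiliser $H=G_v$ of an isotropic \emph{vector} is not a parabolic subgroup of $G$; it is the codimension-one subgroup $\SO(p,q)\rtimes\rr^{p,q}$ of the parabolic stabilising the line $\rr v$ (the paper uses this explicit description). This misstatement does not, however, damage your intermediate-group argument via $Q$, which is a legitimate alternative to the paper's explicit matrix presentation of $\h/(\h\cap\p)$. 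Second, the theorem does not claim that the conformal structure on the local leaf space $N$ arises from a \emph{normal} Cartan geometry, so the ``technical obstacle'' you describe at the end is not part of what must be proved; the paper simply exhibits an $H\cap P$--invariant line in $\h/(\h\cap\p)$ whose quotient carries the standard action of $\co(p-1,q-1)$, which suffices for the stated conclusion.
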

\begin{proof}
  (1) Let us assume that $h(s,s)>0$. Then the stabiliser $H$ of $s$ is
  isomorphic to $SO(p,q+1)\subset SO(p+1,q+1)$. There are three
  possible $P$--types in this case, defined by the fact that the inner
  product of $s(x)\in\Cal T_x$ with a generator of the distinguished
  isotropic ray is positive, zero, or negative. (Note that $s(x)$
  cannot lie in the distinguished isotropic line.) The irreducible
  quotient of $\Cal T$ for conformal structures is a density bundle
  usually denoted by $\Cal E[1]$ which is realised as $\Cal T/(\Cal
  T^1)^\perp$. Hence we conclude that the curved orbit defined by
  $s(x)\perp \Cal T^1_x$ is exactly the zero set of the induced normal
  solution $\si=\Pi(s)$. On the homogeneous model, $s$ is determined
  by a fixed positive vector in $\Bbb R^{p+1,q+1}$ so the orbit in
  question is the subspace of isotropic lines contained in a
  hyperplane, and thus a smoothly embedded hypersurface in $G/P$. This gives
  the description of curved orbits in (1). If $p=0$, then the
  restriction of $h$ to $s(x)^\perp$ is negative definite, and hence
  this subspace does not contain any isotropic lines, so
  $M_0=\emptyset$.
  
  To describe the induced Cartan geometry on $M_\pm$, we have to
  understand the stabiliser of the distinguished isotropic ray in
  $H\cong SO(p,q+1)$, and we know that this ray is transversal to the
  hyperplane $s(x)^\perp$ stabilised by $H$ and different from the
  line $\Bbb R\cdot s(x)$. If we project $\Cal T^1_x$ orthogonally
  into $s(x)^\perp$ we thus obtain a line. This line has to be
  negative, since together with the positive line $\Bbb R\cdot s(x)$
  it spans a plane which contains the isotropic line $\Cal T^1_x$. A
  moment of thought shows that the stabiliser of this negative line in
  $H$ coincides with the stabiliser of the isotropic ray in that
  group, so $H\cap P=SO(p,q)\subset SO(p,q+1)$.
  
  Thus we are in the same situation as in the example of the parallel
  metric on the projective standard tractor bundle. The induced Cartan
  geometry on the open curved orbit is equivalent to a
  pseudo--Riemannian metric of signature $(p,q)$ together with a
  metric connection on the tangent bundle. Using torsion freeness of
  the initial conformal Cartan connection, one concludes that the
  metric connection must be the Levi--Civita connection. From the
  normality of the conformal Cartan connection one then deduces that
  the induced metric is Einstein, with the sign of the Einstein
  constant determined by $h(s,s)$.
  
  The type of the induced Cartan geometry on $M_0$ is even easier to
  determine, since here $H\cap P$ simply is the stabiliser of an
  isotropic ray in the standard representation of $H=SO(p,q+1)$.
  Hence the induced Cartan geometry on the closed curved orbit
  determines an oriented conformal structure of signature $(p-1,q)$.
  It is straightforward to prove that normality of the initial
  conformal Cartan connection implies that also the induced Cartan
  connection over the closed curved orbit is normal. Since the
  discussion for $h(s,s)<0$ is completely parallel, this completes the
  proof of (1).

\medskip

(2) If $h(s,s)=0$, then $s$ spans an isotropic line subbundle in the
standard tractor bundle. In this case, the stabiliser $H$ is the
stabiliser of an isotropic vector in $\Bbb R^{p+1,q+1}$ and thus
isomorphic to $SO(p,q)\rtimes\Bbb R^{p,q}$ (so it is isomorphic to a
codimension one subgroup in the parabolic subgroup $P$). It is also
clear that there are five possible $P$--types, according to the cases
that $s(x)$ lies in the preferred isotropic ray ($M_3^+$), lies in its
negative ($M_3^-$), lies not in the preferred line but in its
orthocomplement ($M_2$), or has positive respectively negative inner
product with the preferred ray ($M_1^\pm$). Notice however, that if
either $p=0$ or $q=0$, the initial vector space $\Bbb R^{p+1,q+1}$ is
Lorentzian and hence does not contain two perpendicular isotropic
lines. Thus in this special case only the first two and the last two
$P$--types can occur.

In the homogeneous model $G/P$, our parallel section is determined by
an isotropic vector in $\Bbb R^{p+1,q+1}$, and it is evident that the
first two and the last two of the five $P$--orbits consist of isolated
points, and open subsets, respectively. For the middle $P$--type, we
observe that taking the tractor inner product of $s$ with the
preferred ray defines the solution $\si\in\Ga(\Cal E[1])$ underlying
$s$. The orbit under consideration consists of the zero set of this
section except for the two points where $s$ lies in $\Cal T^1$.  But
it is evident that these two points are the only ones in which the
hypersurface orthogonal to $s$ is not transversal to the tangent space
of the null--cone, which implies that our orbit is a smooth
hypersurface in the homogeneous model. Hence we obtain the claimed
form of the curved orbit decomposition.

To discuss the induced Cartan geometry on $M_1^\pm$, we have seen that
$H\cong SO(p,q)\rtimes \Bbb R^{p,q}$, and $H\cap P$ is the stabiliser
of an isotropic line in there, which is not perpendicular to the
vector stabilised by $H$. A moment of thought shows that the
isomorphism $H\cong SO(p,q)\rtimes \Bbb R^{p,q}$ can be chosen in such
a way that $H\cap P=SO(p,q)$ viewed as a subgroup in the obvious way.
Hence it is again clear that a Cartan geometry of type $(H,H\cap P)$
is a pseudo--Riemannian metric of signature $(p,q)$ together with a
metric connection. In contrast to the situation in the proof of
Theorem \ref{sec-exmp-proj} the Cartan curvature here simply agrees
with torsion and curvature of that connection. Torsion freeness then
implies that the connection is the Levi--Civita connection of the
induced metric, and normality of the initial conformal Cartan
connection shows that we actually get a Ricci flat metric in the
conformal class.

Hence it remains to discuss the induced Cartan geometry on $M_2$. Here
it is easiest to describe the Lie algebras $\frak h$ and $\frak
h\cap\frak p$ in a basis which starts with the distinguished isotropic
vector $v$, next a perpendicular isotropic vector spanning the line
stabilised by $P$ and then completing this appropriately to a basis.
Then we get
$$
\left\{
\begin{pmatrix}
  0 & 0 & Z & z' & 0 \\ 0 & a & W & 0 & -z' \\ 0 & 0 & A & -\Bbb I W^t
  & -\Bbb I Z^t \\ 0 & 0 & 0 & -a & 0 \\ 0 & 0 & 0 & 0 & 0
\end{pmatrix}\right\}\subset 
\left\{
\begin{pmatrix}
  0 & z & Z & z' & 0 \\ 0 & a & W & 0 & -z' \\ 0 & X & A & -\Bbb I W^t &
  -\Bbb I Z^t \\ 0 & 0 & -X^t\Bbb I & -a & -z \\ 0 & 0 & 0 & 0 & 0
\end{pmatrix}\right\}. 
$$ Here $X,Z^t,W^t\in\Bbb R^{p-1,q-1}$, $a,z,z'\in\Bbb R$, and
$A\in\frak{so}(p-1,q-1)$ and $\Bbb I=\Bbb I_{p-1,q-1}$. Hence $\frak
h/(\frak h\cap\frak p)$, which models the tangent space, can be
identified with $\Bbb R\oplus\Bbb R^{p-1,q-1}$, with the summands
spanned by $z$ and $X$, respectively. The line spanned by $z$ is
invariant under $\frak h$ and will thus give rise to a natural line
subbundle.  
The resulting distribution is of course involutive, so
locally around each point of $M_2$ one can form a local leaf space
$N$. There is a natural subgroup $Q\subset H$ containing $H\cap P$,
whose Lie algebra $\frak q$ is spanned by $\frak h\cap\frak p$ and
$z$. According to Proposition 2.6 of \cite{cap-correspondence} the 
principal $H\cap
P$--bundle over $M_2$ can be locally viewed as a principal $Q$--bundle
over $N$. Moreover, by Theorem 2.7 of that reference, the Cartan
connection on the $H\cap P$--bundle defines a Cartan connection on the
$Q$--bundle if elements of the line subbundle insert trivially into
the Cartan curvature (or equivalently into the Weyl curvature and the
Cotton-York tensor of $[g]$). But from the presentation of the Lie algebras
above, we see that $\frak h/\frak q$, as a module over $Q$ is
equivalent to the standard representation of $CSO(p,q)$ (which
naturally is a quotient of $Q$). Thus, in this case, there is a
natural conformal structure on $N$.
\end{proof}

\begin{remark*}
  (1) The fact that the orbit $M_2$ fibres as claimed in the theorem
  is nicely visible on the homogeneous model. Here the orbit consists
  of isotropic lines contained in $v^\perp$ and different from the
  line spanned by $v$. Now the quotient $v^\perp/\Bbb Rv$ inherits a
  natural inner product of signature $(p,q)$ and we can project our
  isotropic lines to isotropic lines in this quotient with
  one--dimensional fibres.
  
  (2) It is worth noticing that the natural metrics over the open
  orbits which show up in the theorem are determined by the normal
  solution $\si\in\Ga(\Cal E[1])$ underlying the parallel tractor $s$
  in a rather simple way. Recall that a conformal class of metrics can
  be viewed as a canonical section $\mathbf{g}$ of the bundle
  $S^2T^*M\otimes\Cal E[2]$. Hence outside the zero set of $\si$, one
  obtains a metric in the conformal class as
  $\tfrac{1}{\si^2}\mathbf{g}$ and these are the Einstein metrics over
  the open curved orbits.
  
  (3) If $h(s,s)<0$, then it is shown in \cite{gover-PE,gover-aes} that
  locally around $M_0$, one actually obtains a Poincar\'e--Einstein
  metric, and any Poincar\'e--Einstein metric arises in this
  way (of course the metric singularity set may be the boundary of the
  structure).

  (4) We want to point out here that both a parallel metric on a
  projective standard tractor bundle and a parallel section of a
  conformal standard tractor bundle give rise to Einstein metrics on
  open curved orbits and conformal structures on closed curved orbits,
  which are embedded hypersurfaces. In particular, if one considers
  either of the two cases on a compact manifold with boundary for
  which the closed curved orbit coincides with the boundary, one
  obtains a compactification of a (non--compact complete) Einstein
  manifold by adding a conformal structure at infinity. However, this
  leads to two different types of compactifications, see
  \cite{CGH-projective} for more details on this.
\end{remark*}

\subsection{A CR--analog}\label{CR-almost-Einstein}
We conclude this article by looking at a complex analog of almost
Einstein scales in the realm of CR geometry. Since this case is
significantly more complicated than the conformal one, we restrict to
elementary aspects of the description here, and we will take this
topic up in more detail elsewhere.  We discuss this example on the one
hand because it gives rise to a CR version of the Einstein condition,
which is of intrinsic interest. On the other hand there are strong
indications that it will lead to a notion of compactifying a
non--compact complete K\"ahler--Einstein manifold by adding an infinity
carrying a CR structure, which is different from the one discussed in
Section \ref{complex-projective}.

We have already briefly discussed the description of CR structures as
parabolic geometries in Section \ref{complex-projective}. The basic group here
is $G=SU(p+1,q+1)$ and the parabolic subgroup $P\subset G$ is the
stabiliser of an isotropic (complex) line in the standard
representation $\Bbb C^{p+q+2}$. Regular normal parabolic geometries of
this type turn out to be equivalent to partially integrable almost CR
structures of hypersurface type, which are non--degenerate of
signature $(p,q)$ together with the choice of a certain root of the
canonical bundle, compare with \cite{cap-gover-cr-tractors}. Forming the
associated bundle corresponding to the standard representation, one
obtains the \textit{standard tractor bundle}. This is a complex vector
bundle $\Cal T$ of rank $p+q+2$ endowed with a canonical Hermitian
metric $h$ of signature $(p+1,q+1)$, a complex line--subbundle with
isotropic fibres, and a canonical Hermitian connection. We want to
study holonomy reductions determined by a parallel section of the
standard tractor bundle. As in \ref{almost-Einstein}, for a parallel
section $s$ of this bundle the function $h(s,s)$ is constant, and up
to constant rescalings of $s$, the basic $G$--types are distinguished
by the sign of $h(s,s)$. We will only analyse the case $h(s,s)<0$
here, the case $h(s,s)>0$ is closely parallel, and these are the
cases related to compactifications as discussed above.

The homogeneous model $G/P$ is the space of isotropic lines in $\Bbb
C^{p+q+2}$, and the parallel standard tractor is determined by
a negative vector $v\in\Bbb C^{p+q+2}$.  The stabiliser $H$ of $v$
in $G$ is isomorphic to $SU(p+1,q)$ via the action on the
orthocomplement $v^\perp$. Evidently, there are two $H$--orbits in
$G/P$, consisting of the lines contained in $v^\perp$ and the lines
transversal to $v^\perp$, respectively. The latter orbit is clearly
open, while the former forms a smooth embedded submanifold of real
codimension two if $q>0$ and is empty if $q=0$. The normal solution
$\si$ corresponding to $v$ is obtained by interpreting the inner
product with $v$ as a homogeneous function on the null cone in
$\Bbb C^{p+q+2}$, and thus as a section of a (complex) density bundle
on the space $G/P$ of isotropic lines.  Consequently, the closed
$H$--orbit coincides with the zero set $\Cal Z(\si)$.

Via Theorem \ref{2.5} this description readily carries over to a
parallel standard tractor $s$ with $h(s,s)<0$ on general curved
geometries. The curved orbit decomposition has the form $M=M_+\cup
M_0$, where $M_+$ is open, and $M_0$ coincides with the zero set of
the underlying normal solution and, if non--empty, consists of smoothly
embedded submanifolds of real codimension two. Note that $M_0$ must be
empty if $q=0$.

Let us next describe the induced Cartan geometries. In the case of
$M_0$, the distinguished isotropic line is contained in $v^\perp$, so
we can simply identify $H\cap P$ with the stabiliser of a complex
isotropic line in the standard representation $\Bbb C^{p+q+1}$ of
$H\cong SU(p+1,q)$. It is straightforward to verify directly that this
induced Cartan geometry is automatically regular, thus giving rise to
a partially integrable almost CR--structure on $M_0$ of hypersurface
type, which is non--degenerate of signature $(p,q-1)$. 

In the case of $M_+$, we have to determine the stabiliser in $H$ of
an isotropic line $\ell$ which is transversal to $v^\perp$. Now
elementary linear algebra shows that there is a unique vector $w\in
v^\perp$ such that $v+w\in\ell$. Clearly, any element of $H$ which
stabilises $\ell$, also has to stabilise $w$, and the converse also
holds. Since $v$ is negative and $v+w$ is null, $w$ must be positive,
and we see that $H\cap P\cong SU(p,q)$. Passing to the Lie algebra
level, we get a similar matrix presentation to that in Section
\ref{complex-projective}:
$$
H\cap P=\left\{\begin{pmatrix} 0 & 0\\
    0 & A \end{pmatrix}\right\}\subset
\left\{\begin{pmatrix} ix & -Z^*\Bbb I\\
    Z & A-\tfrac{ix}{p+q}\id \end{pmatrix}\right\}=H,
$$
where $x\in\Bbb R$, $Z\in\Bbb C^{p+q}$, $A\in\frak{su}(p,q)$, and
$\Bbb I=\Bbb I_{p,q}$. It is easy to see that the component in $\frak
h/(\frak h\cap\frak p)$ determined by $Z$ corresponds the contact
distribution $HM_+$ (equipped with the complex structure), while
projecting to the component determined by $x$ gives rise to a
distinguished contact form. This also determines a Hermitian metric on
$HM_+$ which is extended to a Riemannian metric on $TM_+$. In
addition, the induced Cartan geometry of type $(H,H\cap P)$ on $M_+$
determines a linear connection on $TM_+$ which is compatible with all
these structures. However, the discussion of consequences of normality
is much more complicated here than in the real case, in particular if
the initial structure has torsion, and we will not go into this.

Compared to the real case, there is an entirely new feature here,
however. Namely, the parallel section $s\in\Ga(\Cal T)$ determines a
parallel section of the bundle $\frak{su}(\Cal T)$, which is given by
the tracefree part of $\tilde s\mapsto h(\tilde s,s)Js$. Now
$\frak{su}(\Cal T)$ is the adjoint tractor bundle associated to the CR
structure, so a parallel section of this bundle determines a normal
infinitesimal automorphism of the geometry. It is easy to see, that on
$M_+$, this infinitesimal automorphism is nowhere vanishing, and it
is even transversal to the contact distribution there. Hence the flow
lines of this infinitesimal automorphism determine a foliation of
$M_+$ with leaves of real dimension one, and one can form local spaces
of leaves. 

The component $M_0$ can be interpreted as a CR infinity for the local
leaf spaces. Moreover, since the leaves of the foliation are
transversal to the contact distribution, any tangent space of such a
leaf space can be identified with the contact subspaces along the
leaf. Then one gets an almost complex structure on the leaf
space. Using Theorem \ref{2.5}, these facts follow by comparison to the
homogeneous model.  Moreover, on each local leaf space one gets an induced
Cartan geometry of type $(SU(p+1,q),S(U(1)\x U(p,q)))$. 

As a final remark we note that this structure  can then
be analysed in a similar way to the structure in Section
\ref{complex-projective}.  If the initial Cartan geometry is torsion
free (i.e.~if the initial structure is CR), then away from $M_0$ this
is precisely the setting considered by \cite{leitner-pseudo} and so it
can be expected that in fact this recovers the K\"ahler--Einstein
metric found there.

\subsection*{Acknowledgements}

\noindent 
M.H. benefited from many interesting discussions on conformal holonomy
with Stuart Armstrong and Felipe Leitner at several `Srni Winter
School Geometry and Physics'-conferences, and also on other occasions.
\v{C}ap and Hammerl appreciate support through the project P23244-N13 of the
"Fonds zur F\"{o}rderung der wissenschaftlichen For\-schung"
(FWF). Hammerl was also supported by a Junior Research Fellowship of
The Erwin Schr\"{o}dinger International Institute for Mathematical
Physics (ESI).

\noindent \v Cap and Gover are thankful for the support of the Royal
Society of New Zealand (Marsden Grant 10-UOA-113).

\noindent The work on this article was completed during the workshop
``Cartan connections, geometry of homogeneous spaces, and dynamics" at
the ESI.  Support of the University of Vienna via the ESI is
gratefully acknowledged.

\def\polhk#1{\setbox0=\hbox{#1}{\ooalign{\hidewidth
  \lower1.5ex\hbox{`}\hidewidth\crcr\unhbox0}}} \def\cprime{$'$}

\end{document}